\newtheorem{theorem}{Theorem}[section]
\newtheorem{lemma}[theorem]{Lemma}
\newtheorem{proposition}[theorem]{Proposition}
\theoremstyle{definition}
\theoremstyle{remark}
\newtheorem{remark}[theorem]{Remark}
\numberwithin{equation}{section}
 \def\Q{\mathbb Q}  
\def\P{\mathbb P} 
\def\res{$58$ }
\def\resld{$27$ }
\def\resle{$31$ }
\def\tabld{$56$ }
\def\resfour{$57$ }
\def\tab{$14$ }
\begin{document}

\title{Gorenstein $\mathbb{Q}$-homology projective planes}

\author{DongSeon Hwang}
\address{Department of Mathematics, Ajou University, Suwon 443-749, Korea}
\email{dshwang@ajou.ac.kr}

\author{JongHae Keum}
\address{School of Mathematics, Korea Institute For Advanced Study, Seoul 130-722, Korea}
\email{jhkeum@kias.re.kr}

\author{Hisanori Ohashi}
\address{Department of Mathematics, Faculty of Science and Technology, Tokyo University of Science, 2641 Yamazaki, Noda, Chiba 278-8510, JAPAN}
\email{ohashi.hisanori@gmail.com}


\subjclass[2010]{Primary 14J28; Secondary 14J17, 14J25}

\date{December 27, 2013 and, November 19, 2014 in revised form.}

\keywords{$\mathbb{Q}$-homology projective plane, Enriques surface, rational double point}

\begin{abstract}
We present the complete list of all singularity types on
Gorenstein $\mathbb{Q}$-homology projective planes, i.e.,
normal projective surfaces of second Betti number one with at worst rational double points. The list consists of 58 possible singularity types, each except two types supported by an example. 
\end{abstract}

\maketitle

\section{Introduction}
A normal projective surface with quotient singularities whose second Betti number is 1 is called a \textit{$\mathbb{Q}$-homology projective plane}. It is easy to see by considering the Albanese map that such a surface has first Betti number $0$, hence has the same Betti numbers with the complex projective plane. 
  A $\mathbb{Q}$-homology projective plane has Picard number $1$ and either $\pm K$ is ample or $K$ is numerically trivial. The minimal resolution of a $\mathbb{Q}$-homology projective plane has $p_g = q = 0$. A $\mathbb{Q}$-homology projective plane is said to be \textit{Gorenstein} if its singularities are all rational double points. A combination of rational double points will be denoted by its singularity type, a combination of $A_n$, $D_n$ and $E_n$, e.g., the singularity type $2A_3\oplus 3A_1$ denotes 2 rational double points of type $A_3$ and 3 of type $A_1$.

Gorenstein $\mathbb{Q}$-homology projective planes with ample anti-canonical divisor are   called  \textit{Gorenstein log del Pezzo surfaces of rank 1} and classified by Furushima \cite{Furushima}, Miyanishi and Zhang \cite{MZ}, and by Ye \cite{Ye}: there are exactly 27 singularity types on such surfaces and the surfaces with each singularity type were classified. The list of these 27 types will be denoted by ${\bf L}_{K<0}$.

Let ${\bf L}$ be the list of all singularity types on Gorenstein $\mathbb{Q}$-homology projective planes, and ${\bf L}_{K \equiv 0}$ (resp. ${\bf L}_{K\not\equiv 0}$) be the sub-list corresponding to the case where the canonical class $K$ is numerically trivial (resp. not trivial).  Then ${\bf L}_{K\not\equiv 0}$ contains two sub-lists ${\bf L}_{K<0}$ and ${\bf L}_{K>0}$, and the two sub-lists  may overlap each other.  Since a Gorenstein $\mathbb{Q}$-homology projective plane has rational double points only, it can be shown (see Section 2) that the rank of the singularity type is at most 9 and is equal to 9 if and only if $K\equiv 0$. Thus ${\bf L}$ is the disjoint union of ${\bf L}_{K\not\equiv 0}$ and ${\bf L}_{K\equiv 0}$. Not much has been known about
 ${\bf L}$  except the sub-list ${\bf L}_{K<0}$. In this note, we obtain the list ${\bf L}$.

\begin{theorem}\label{main} The list ${\bf L}$ consists of  \res singularity types, each except the two types $2A_3 \oplus A_2\oplus A_1$ and $A_3 \oplus 3A_2$ supported by an example $($Subsection \ref{real} and Table \ref{Kondo}$)$.

\medskip
 ${\bf L}_{K\not\equiv 0}= {\bf L}_{K<0}$ $($\resld types$):$
$A_8$, $A_7 \oplus A_1$, $A_5 \oplus A_2 \oplus A_1$, $2A_4$, $2A_3 \oplus 2A_1$, $4A_2$, $E_8$, $E_7 \oplus A_1$, $E_6 \oplus A_2$, $D_8$, $D_6 \oplus 2A_1$, $D_5 \oplus A_3$, $2D_4$, $A_7$, $A_5 \oplus A_2$, $2A_3 \oplus A_1$, $E_7$, $D_6 \oplus A_1$, $D_4 \oplus 3A_1$, $A_5 \oplus A_1$, $3A_2$, $E_6$, $A_3 \oplus 2A_1$, $D_5$, $A_4$, $A_2 \oplus A_1$, $A_1$, 

\medskip
${\bf L}_{K \equiv 0}$ $($\resle types$):$
$A_9$, $A_8 \oplus A_1$, $A_7 \oplus A_2$, $A_7 \oplus 2A_1$,  $A_6 \oplus A_2 \oplus A_1$,   $A_5 \oplus A_4$, $A_5 \oplus A_3 \oplus A_1$, $A_5 \oplus 2A_2$, $A_5 \oplus A_2 \oplus 2A_1$, $2A_4 \oplus A_1$, $A_4 \oplus A_3 \oplus 2A_1$, $3A_3$, $2A_3 \oplus 3A_1$,
$D_9$, $D_8 \oplus A_1$, $D_7 \oplus 2A_1$, $D_6 \oplus A_3$, $D_6 \oplus A_2 \oplus A_1$, $D_6 \oplus 3A_1$, $D_5 \oplus A_4$, $D_5 \oplus A_3 \oplus A_1$, $D_5 \oplus D_4$,  $D_4 \oplus A_3 \oplus 2A_1$, $2D_4 \oplus A_1$,
$E_8 \oplus A_1$, $E_7 \oplus A_2$, $E_7 \oplus 2A_1$, $E_6 \oplus A_3$, $E_6 \oplus A_2 \oplus A_1;$ $2A_3 \oplus A_2\oplus A_1$, $A_3 \oplus 3A_2$.

 We do not know whether any of the last two types realizes or not. 
\end{theorem}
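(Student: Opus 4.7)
The strategy splits cleanly along the dichotomy $\mathbf{L}=\mathbf{L}_{K\not\equiv 0}\sqcup\mathbf{L}_{K\equiv 0}$ established in the introduction. The \resld types of $\mathbf{L}_{K<0}$ are imported verbatim from the Furushima--Miyanishi--Zhang--Ye classification of Gorenstein log del Pezzo surfaces of rank one. The remaining work is thus twofold: (i) to produce $\mathbf{L}_{K\equiv 0}$, and (ii) to prove the absorption $\mathbf{L}_{K>0}\subseteq\mathbf{L}_{K<0}$ so that the asserted equality $\mathbf{L}_{K\not\equiv 0}=\mathbf{L}_{K<0}$ holds.

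For (i), the key geometric observation is that if $X$ is a Gorenstein $\mathbb{Q}$-homology projective plane with $K_X\equiv 0$ and $\pi\colon Y\to X$ is the minimal resolution, then $K_Y=\pi^*K_X\equiv 0$ together with $p_g(Y)=q(Y)=0$ forces $Y$ to be an Enriques surface by the Enriques--Kodaira classification. The reduced exceptional divisor is a disjoint union of ADE configurations of $(-2)$-curves of total rank $9$, spanning a corank-one root sublattice $R\subset\mathrm{Num}(Y)\cong U\oplus E_8$. Two separate tasks now arise. The first is lattice-theoretic: enumerate all rank-nine ADE root lattices $R$ (possibly after passage to a root overlattice) admitting an embedding into $U\oplus E_8$ with rank-one orthogonal complement. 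This is finite and standard via Nikulin's discriminant-form calculus---one matches $\mathrm{disc}(R)$ against the discriminant of the rank-one orthogonal complement, up to compatible gluings---and it cuts the candidate list down to the \resle types recorded in $\mathbf{L}_{K\equiv 0}$. The second task is geometric realisation: for each candidate, exhibit an Enriques surface on which the prescribed root classes are simultaneously represented by disjoint smooth rational curves, without further $(-2)$-curves forcing singularities to collide. Here Kond\=o's work on Enriques surfaces (especially those with finite automorphism group, and constructions via involutions on K3 surfaces carrying large ADE configurations) supplies essentially all the required examples, as recorded in Table~\ref{Kondo}; the remaining cases are handled by explicit construction in Subsection~\ref{real}.

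The main obstacle is this geometric realisation step: the lattice-theoretic existence of an embedding is necessary but far from sufficient, since the root classes may fail to be simultaneously effective, or spurious $(-2)$-curves may force singularities to collide. The two residual types $2A_3\oplus A_2\oplus A_1$ and $A_3\oplus 3A_2$ are precisely those candidates that pass every lattice obstruction at our disposal but for which neither a realisation nor a rigorous geometric obstruction is currently known. For part (ii), a direct analysis of the minimal resolution $Y\to X$ of a Gorenstein $\mathbb{Q}$-homology projective plane with $K_X$ ample---using that $K_Y=\pi^*K_X$ is nef and big with $p_g(Y)=q(Y)=0$, the Noether and Bogomolov--Miyaoka--Yau inequalities, the rank-nine cap on the exceptional locus, and the condition $\rho(X)=1$---shows that each such singularity type already occurs on some Gorenstein log del Pezzo surface of rank one, so $\mathbf{L}_{K>0}$ contributes nothing new and the total count is indeed \res.
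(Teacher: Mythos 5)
Your overall skeleton (the dichotomy $\mathbf{L}=\mathbf{L}_{K\not\equiv 0}\sqcup\mathbf{L}_{K\equiv 0}$, the Enriques surface appearing as the minimal resolution when $K\equiv 0$, a lattice-embedding obstruction, and realization via Kond\=o's surfaces) matches the paper, but two steps as you describe them would not close. First, in the $K\equiv 0$ case you claim that embedding the rank-nine root lattice $R$ into $H\oplus E_8$ (via discriminant-form or, as the paper does, $p$-adic epsilon-invariant calculus) ``cuts the candidate list down'' to the \resle types. It does not: some rank-nine candidates with many components, e.g.\ $D_5\oplus 4A_1$, pass every arithmetic embedding test and are excluded only by genuinely geometric arguments. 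The paper handles this by first invoking Theorem \ref{hk} (from \cite{HK1}), which bounds the number of singular points of any $\mathbb{Q}$-homology projective plane by $5$ and shows that $2A_3\oplus 3A_1$ is the unique type with exactly $5$; only then, under the hypothesis $|Sing(\bar S)|\le 4$, does lattice theory (Lemma \ref{lattice}) finish the job. Without that input your list would contain spurious types.

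Second, your treatment of $\mathbf{L}_{K\not\equiv 0}=\mathbf{L}_{K<0}$ misidentifies the mechanism. You propose a direct geometric analysis of the $K$-ample case (Noether, Bogomolov--Miyaoka--Yau, etc.) to show each type occurring there already occurs on a log del Pezzo surface; this is both vague and implausible as stated --- the paper's own remark notes that existence in the $K>0$ case is unknown for $19$ of the types, so no such constructive transfer is available. What the paper actually does is purely arithmetic and sign-independent: Lemma \ref{det} (that $|\det(R)|\cdot K_{\bar S}^2$ must be a perfect square when $K\not\equiv 0$) eliminates the \tabld types of Table \ref{det-table} at a stroke, and the remaining candidates are killed by epsilon-invariant obstructions to embedding $R$ into the odd unimodular lattice $I_{1,\mathrm{rank}(R)}$ (or into $H$). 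This yields an a priori upper bound of \resld types for all of $\mathbf{L}_{K\not\equiv 0}$, each of which is already realized with $K<0$ by \cite{MZ} and \cite{Ye}, whence the equality --- no analysis of surfaces of general type is needed. Your proposal is missing the square-determinant criterion entirely, and BMY (which the paper uses only to remark that $\mathbf{L}_{K>0}$ has at most $23$ types) cannot substitute for it.
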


The proof goes as follows.

When $K\not\equiv 0$, we only use arithmetic argument and lattice theory, without geometric argument, to obtain a list of 27 types, hence must coincide with the list ${\bf L}_{K<0}$ of \cite{MZ} and \cite{Ye}.  
Our method is different from theirs which used geometric structure of log del Pezzo surfaces.

When $K\equiv 0$, the rank of the singularity type is 9 and  we divide into two cases: the number of singular points  is at least 5; at most 4. In the first case there is only one type, namely $2A_3 \oplus 3A_1$, by the result of Hwang and Keum (\cite{HK1}, Theorem 1.1), whose proof requires not only arithmetic argument but also certain geometric argument, e.g., the possibility $D_5 \oplus 4A_1$ was ruled out by geometric argument.  In the second case we only use arithmetic argument to obtain a list of 30 types, which together with the type $2A_3 \oplus 3A_1$ yields the desired list ${\bf L}_{K \equiv 0}$.  Since $K\equiv 0$ and the surface has rational double points only, the minimal resolution of the surface has $p_g=q=0$ and $K\equiv 0$, hence is an Enriques surface. A singularity type gives on the minimal resolution a configuration of the same type of nine smooth rational curves. Each of the \resle types except the two types can be realized by contracting a  configuration of nine smooth rational curves on a suitable Enriques surface. For the realization, see Subsection \ref{real}, where we use the Enriques surfaces constructed by Kond\={o} \cite{Kondo}.   We do not know how many Enriques surfaces with given configuration type exist in general. There are only finitely many Enriques surfaces with a configuration of type $2A_3 \oplus 3A_1$ \cite{Keum2}. Section \ref{example} provides the construction of one-dimensional families of Enriques surfaces with a configuration of type $D_8 \oplus A_1$ or $E_7 \oplus 2A_1$.

\begin{remark} Consider the list ${\bf L}_{K>0}$ corresponding to the case where $K$ is ample. Among the 27 possible singularity types in ${\bf L}_{K\not\equiv 0}$ the four types  $A_1$, $A_2 \oplus A_1$,  $A_3 \oplus 2A_1$ and $D_4 \oplus 3A_1$ can be ruled out by the orbifold version of Bogomolov-Miyaoka-Yau inequality (e.g. \cite{Miyaoka}). Thus the list ${\bf L}_{K>0}$ consists of at most 23 types.  Since $K$ is ample, the minimal resolution is a minimal surface of general type with $p_g=q=0$.
Examples were found  by Keum \cite{Keum} for the singularity types $3A_2$ and $4A_2$, and by Frapporti \cite{Frapporti} for $2A_3 \oplus 2A_1$ and $2A_3 \oplus A_1$. For the remaining $19$ types the existence is not known.
\end{remark}

\medskip
 Throughout this paper, we work over the field $\mathbb{C}$ of complex numbers.

\section{Proof of Theorem \ref{main}}
\subsection{${\bf L}$ consists of at most \res types.}
In this subsection, we prove the following.

\begin{proposition}\label{cl} Let $\bar{S}$ be a Gorenstein $\mathbb{Q}$-homology projective plane. Then the singularity type of $\bar{S}$ is one of the \res types listed in Theorem \ref{main}.
\end{proposition}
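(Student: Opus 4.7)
The plan is to analyze the root lattice $R$ spanned by the exceptional curves on the minimal resolution $\pi: S \to \bar{S}$ using the arithmetic of the unimodular ambient cohomology lattice, after first extracting the basic numerical invariants.

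\textbf{Setup.} Since rational double points are rational singularities, $\pi$ is crepant, so $K_S=\pi^*K_{\bar S}$ and $p_g(S)=q(S)=0$. Let $r=\mathrm{rank}(R)$; then $b_2(S)=1+r$ and Noether's formula yields $K_S^2=12-(2+b_2(S))=9-r$. Because $\bar{S}$ has Picard number $1$, we have $K_S^2=K_{\bar S}^2\ge 0$, with equality exactly when $K_{\bar S}\equiv 0$; hence $r\le 9$ with equality iff $K_{\bar S}\equiv 0$. This is the dichotomy used throughout.

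\textbf{Case $K_{\bar S}\not\equiv 0$.} Here $r\le 8$ and $L:=H^2(S,\mathbb{Z})_{\mathrm{free}}$ is a unimodular lattice of signature $(1,r)$. The embedding $R\hookrightarrow L$ has rank-$1$ orthogonal complement $T$, which is positive definite (it contains the pullback of the ample generator of $\mathrm{Num}(\bar S)$). Let $R'$ denote the primitive closure of $R$ in $L$; Nikulin's gluing for a unimodular overlattice forces the discriminant-form identification $A_{R'}\cong -A_T$. Together with the further restriction that $K_S\in T\otimes\mathbb{Q}$ has square $9-r$, this produces strong divisibility and parity conditions on $R$. A direct case-by-case enumeration of root systems of each admissible rank compatible with these conditions yields exactly 27 types; since $\bf L_{K<0}\subset \bf L_{K\not\equiv 0}$ already contains 27 types (\cite{MZ},\cite{Ye}), the two lists must coincide.

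\textbf{Case $K_{\bar S}\equiv 0$.} Then $r=9$, and $S$ is a smooth projective surface with $p_g=q=0$ and $K_S\equiv 0$, hence an Enriques surface with $\mathrm{Num}(S)\cong U\oplus E_8(-1)$. Let $k$ be the number of singular points of $\bar S$. If $k\ge 5$, Theorem 1.1 of \cite{HK1} forces the singularity type $2A_3\oplus 3A_1$. If $k\le 4$, we enumerate root lattices $R$ of rank $9$ with at most $4$ irreducible components admitting an embedding into $U\oplus E_8(-1)$ whose rank-$1$ orthogonal complement $T$ has positive even square and matching discriminant form $A_T\cong -A_{R'}$. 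The surviving configurations are precisely the 30 types listed in Theorem \ref{main} under $K\equiv 0$ with at most 4 components; together with $2A_3\oplus 3A_1$ they give the complete list $\bf L_{K\equiv 0}$ of \resle types.

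\textbf{Main obstacle.} The technical heart in both cases is the arithmetic enumeration: for each candidate root system $R$ one must list the admissible primitive overlattices $R'$ in $L$ (equivalently, isotropic subgroups of $A_R$ of the required sign) and verify the existence of a rank-$1$ complement $T$ of prescribed signature, parity, and discriminant form via Nikulin's embedding criteria. The subtlest point is the Enriques subcase $k\ge 5$: it cannot be settled by arithmetic alone (for example $D_5\oplus 4A_1$ passes every discriminant-form test but is geometrically excluded), which is precisely why the more delicate geometric analysis of \cite{HK1} is invoked there.
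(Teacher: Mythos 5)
Your overall strategy is the same as the paper's: reduce to the case of at most $4$ singular points via \cite[Theorem 1.1]{HK1} (which handles $\ge 5$ points and is indeed the only place genuine geometry enters, as you correctly note with the $D_5\oplus 4A_1$ example), compute $K_{\bar S}^2=9-\mathrm{rank}(R)$ to bound the rank, and then kill the non-realizable root lattices by arithmetic obstructions to embedding $R$ into the unimodular lattice $H^2(S,\mathbb{Z})_{\mathrm{free}}$. However, as written your argument has a genuine gap: the entire mathematical content of the proposition is the case-by-case exclusion, and you assert its outcome (``yields exactly 27 types'', ``the surviving configurations are precisely the 30 types'') without carrying it out or even specifying a criterion sharp enough to be checked. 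There are $127$ candidate types satisfying $\mathrm{rank}(R)\le 9$ and at most $4$ components, and $70$ of them must be eliminated one by one. The paper does this with two concrete tools: (i) for $K\not\equiv 0$, the fact that $|\det(R)|\cdot K_{\bar S}^2$ must be a perfect square (\cite[Lemma 3.3]{HK1}), which disposes of $56$ types, and (ii) for the remaining $14$ types, a computation of the $p$-adic $\varepsilon$-invariants of $R\oplus R^{\perp}$ versus those of $H\oplus E_8$ or $I_{1,L}$ via \cite[Theorem IV-9]{Serre}, exhibiting a prime $p$ (always $p=3,5$ or $7$) where they disagree. Without performing these verifications, the claim that the surviving list has exactly $27$ (resp.\ $30$) members is unsubstantiated, and the concluding step ``both lists have $27$ elements, hence coincide'' has nothing to rest on.

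Two further technical soft spots. First, ``a unimodular lattice of signature $(1,r)$'' is not determined up to isometry: you must fix the parity, and the paper is careful to note that $H^2(S,\mathbb{Z})_{\mathrm{free}}$ is either $I_{1,r}$ or (only when $S$ is a minimal ruled surface, forcing $r=1$) the even lattice $H$. Second, Nikulin's gluing isomorphism $A_{R'}\cong -A_T$ of discriminant \emph{quadratic} forms is a statement about even lattices; for the odd ambient lattice $I_{1,r}$ occurring when $K\not\equiv 0$ you would need the odd-lattice variant or, as the paper does, work instead with genus invariants over $\mathbb{Q}_p$, which apply uniformly. Neither issue is fatal to the strategy, but both must be addressed before the enumeration can be carried out rigorously.
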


If $|Sing(\bar{S})| \geq 5$, then the singularity type of $\bar{S}$ must be $2A_3 \oplus 3A_1$ by the following result.

\begin{theorem}$($\cite[Theorem 1.1]{HK1}, \cite[Theorem 1.2]{Be}$)$\label{hk}
Let $\bar{S}$ be a $\mathbb{Q}$-homology projective plane (with quotient singularities). Then $\bar{S}$ contains at most $5$ singular points, and if $\bar{S}$ contains $5$ singular points, then $\bar{S}$ is a Gorenstein log Enriques surface with singularity type $2A_3 \oplus 3A_1$.
\end{theorem}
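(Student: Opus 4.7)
The plan is to combine the orbifold Bogomolov--Miyaoka--Yau (BMY) inequality with a lattice-theoretic analysis of the minimal resolution. Let $\pi\colon S \to \bar S$ denote the minimal resolution, write $n = |\mathrm{Sing}(\bar S)|$, and let $G_p$ denote the local fundamental group at $p\in\mathrm{Sing}(\bar S)$. Since $\bar S$ has the Betti numbers of $\mathbb P^2$, $e_{\mathrm{top}}(\bar S)=3$ and
\[
e_{\mathrm{orb}}(\bar S) \;=\; 3 \;-\; \sum_{p}\bigl(1-|G_p|^{-1}\bigr).
\]
As $\rho(\bar S)=1$, the class $K_{\bar S}$ is numerically zero or $\pm K_{\bar S}$ is ample; the orbifold BMY for log-terminal surfaces yields $e_{\mathrm{orb}}(\bar S)\ge 0$, with the sharper $K_{\bar S}^{\,2}\le 3\,e_{\mathrm{orb}}(\bar S)$ when $K_{\bar S}$ is nef. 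Since $|G_p|\ge 2$ forces every summand to be $\ge 1/2$, one obtains $n\le 6$, with equality only if every singularity is of type $A_1$.

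To rule out $n=6$: here $e_{\mathrm{orb}}(\bar S)=0$, so the sharp BMY and $\rho(\bar S)=1$ together give $K_{\bar S}\equiv 0$. Since each singularity would be Du Val, $S$ is a smooth minimal surface with $K_S\equiv 0$ and $p_g=q=0$, hence an Enriques surface, so $\rho(S)=10$. But the six mutually orthogonal $(-2)$-curves from the exceptional locus, together with the rank-$1$ subspace $\pi^{*}\mathrm{Pic}(\bar S)\otimes\mathbb Q$ (which is orthogonal to the exceptional locus), span only a $7$-dimensional subspace of $\mathrm{Pic}(S)\otimes\mathbb Q$. This contradicts $\rho(S)=10$, so $n\le 5$.

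For $n=5$, the bound $\sum_{p}(1-|G_p|^{-1})\le 3$ still admits many combinatorial possibilities. I would enumerate all quintuples of quotient singularity types compatible with this sum, then, for each candidate, check: (i) the Picard-rank identity $\rho(S)=1+r$, where $r$ is the total rank of the exceptional divisor, against the possible types of $S$ (rational, Enriques, or $K3$-like); (ii) a Nikulin-style primitive embedding of the singularity lattice into the appropriate overlattice (the Enriques lattice when $K_{\bar S}\equiv 0$) whose rank-$1$ orthogonal complement is generated by an ample class; and (iii) compatibility of the associated discriminant forms. The only combination surviving all three tests is $2A_3\oplus 3A_1$, of total rank $9$, which embeds into the Enriques lattice of rank $10$ with a rank-$1$ orthogonal complement; since this type is Du Val and fills the rank budget, $\bar S$ is a Gorenstein log Enriques surface with the stated singularity type.

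The main obstacle is the enumeration and elimination for $n=5$: several near-miss configurations such as $D_5\oplus 4A_1$ satisfy the crude BMY count and the naive Picard-rank identity and must be excluded by the finer discriminant-form/primitive-embedding analysis, sometimes supplemented --- as noted in the introduction for $D_5\oplus 4A_1$ --- by an explicit geometric argument on the minimal resolution (e.g., using elliptic fibrations on Enriques surfaces). Handling mixtures involving non-Gorenstein cyclic quotient singularities, where the discrepancy is nonzero and the relevant overlattice is no longer the Enriques lattice, is the most delicate part of the argument.
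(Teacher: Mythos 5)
First, note that the paper does not prove this statement at all: Theorem \ref{hk} is quoted verbatim from \cite{HK1} and \cite{Be}, so your proposal can only be measured against the strategy of those cited works, which the introduction summarizes as ``not only arithmetic argument but also certain geometric argument.'' Your overall shape --- an orbifold Euler-number bound to cap the number of singular points, followed by lattice-theoretic and geometric elimination in the five-point case --- is indeed the shape of the cited proof, and you correctly anticipate that configurations such as $D_5 \oplus 4A_1$ survive the numerical tests and must be excluded geometrically.

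Two genuine gaps remain. (1) The inequality $e_{orb}(\bar S)\ge 0$ is not a consequence of the orbifold Bogomolov--Miyaoka--Yau inequality when $-K_{\bar S}$ is ample: the inequality $K^2\le 3e_{orb}$ you invoke requires $K$ nef, and your elimination of $n=6$ (``the sharp BMY and $\rho=1$ give $K\equiv 0$'') silently assumes nefness. The log del Pezzo case is precisely why the paper cites Belousov \cite{Be} (a rank-one log del Pezzo surface has at most four singular points) alongside \cite{HK1}; your proposal needs that, or an equivalent substitute, as a separate input. (2) For $n=5$ the crude count $\sum_p\bigl(1-|G_p|^{-1}\bigr)\le 3$ does not yield a finite enumeration: four $A_1$-points already contribute $2$, and $1-|G|^{-1}<1$ for every finite group, so the fifth singularity is a priori completely unconstrained, including non-cyclic and non-Gorenstein types. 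Cutting this down requires the finer local inequalities of \cite{HK1}, which bound each point's contribution to $K_{\bar S}^2$ through discrepancies, not merely the Euler number; and your steps (i)--(iii) are a program rather than an argument --- you assert the intended conclusion that $2A_3\oplus 3A_1$ is the unique survivor without carrying out the elimination. As written, the proposal is a reasonable road map of the cited proof, but not a proof.
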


Now assume that
\begin{equation}\label{leq4} |Sing(\bar{S})| \leq 4.
\end{equation}
 Since $\bar{S}$ has Picard number one, either $\pm K_{\bar{S}}$ is ample or $K_{\bar{S}}$ is numerically trivial, hence $K^2_{\bar{S}} \geq 0$ with equality iff $K_{\bar{S}}\equiv 0$. Let $$f:S \rightarrow \bar{S}$$
be the minimal resolution and $$R \subset H^2(S,  \mathbb{Z})_{free} := H^2(S,  \mathbb{Z})/(\text{torsion}) $$  be the sublattice  spanned by the numerical classes of the exceptional curves of $f:S \rightarrow \bar{S}$.  Then $S$ has second Betti number $1+{\rm rank}(R)$, $p_g=q=0$, thus $K_S^2 = 9-{\rm rank}(R)$ by Noether's formula. Since $\bar{S}$ has rational double points only, $K_S=f^*K_{\bar{S}}$  and $$K^2_{\bar{S}}=K_S^2=9-{\rm rank}(R)\geq 0,$$ hence
\begin{equation}\label{leq9} {\rm rank}(R) \leq 9
\end{equation}
with equality iff $K_{\bar{S}}\equiv 0$. The lattice $R$ and its singularity type will be denoted by the same notation, a direct sum of root lattices $A_n$, $D_n$ and $E_n$. It is easy to write down all possible singularity types satisfying the conditions (\ref{leq4}) and (\ref{leq9}). There are $127$ possible types:

\medskip

\begin{itemize}
\item the \resfour types in Theorem \ref{main} (except the type $2A_3 \oplus 3A_1$),
\item the \tabld types in Table \ref{det-table},
\item the \tab  types in Lemma \ref{lattice}.
\end{itemize}

\medskip

The \tabld types in Table \ref{det-table} are excluded by the following useful lemma.

\begin{lemma}\label{det}$($\cite[Lemma 3.3]{HK1}$)$
Let $\bar{S}$ be a $\mathbb{Q}$-homology projective plane (with quotient singularities). If $K_{\bar{S}}\not\equiv 0$, then
$|det(R)| \cdot K^2_{\bar{S}}$ is a square number.
\end{lemma}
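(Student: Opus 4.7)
My plan is to exploit the unimodularity of the cohomology lattice $L := H^2(S, \mathbb{Z})_{\mathrm{free}}$, where $f : S \to \bar{S}$ denotes the minimal resolution, by comparing $R$ against its rank-one orthogonal complement in $L$.

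Because $\bar{S}$ is a $\mathbb{Q}$-homology projective plane, $b_2(S) = 1 + \mathrm{rank}(R)$, so $R^{\perp} \subset L$ has rank one. Since $p_g = 0$, the lattice $L$ has signature $(1, b_2(S)-1)$, and the negative-definiteness of $R$ then forces $R^{\perp}$ to be positive definite. Let $h$ be a primitive generator of $R^{\perp}$ and set $d := h^2 > 0$. Because $L$ is unimodular,
$$[L : R \oplus \mathbb{Z} h]^2 \;=\; \bigl| \det(R \oplus \mathbb{Z} h) \bigr| \;=\; |\det(R)| \cdot d,$$
so $|\det(R)| \cdot d$ is already a perfect square integer.

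To bring in the canonical class, I would use the fact that quotient singularities are $\mathbb{Q}$-factorial, so $f^* K_{\bar{S}}$ is a well-defined $\mathbb{Q}$-divisor on $S$, and by the projection formula $f^* K_{\bar{S}} \cdot E = 0$ for every exceptional curve $E$. Hence the class of $f^* K_{\bar{S}}$ in $L \otimes \mathbb{Q} = \mathrm{NS}(S) \otimes \mathbb{Q}$ (the identification coming from $p_g = q = 0$) lies in $R^{\perp} \otimes \mathbb{Q} = \mathbb{Q} \cdot h$. Writing $f^* K_{\bar{S}} = \alpha h$ with $\alpha \in \mathbb{Q}$ yields $K_{\bar{S}}^2 = \alpha^2 d$, and therefore
$$|\det(R)| \cdot K_{\bar{S}}^2 \;=\; \alpha^2 \bigl( |\det(R)| \cdot d \bigr) \;=\; \bigl( \alpha \cdot [L : R \oplus \mathbb{Z} h] \bigr)^2,$$
manifestly a square in $\mathbb{Q}$ (and a square integer when $K_{\bar{S}}$ is Cartier, as in the Gorenstein case of interest).

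The main obstacle, such as it is, amounts to rank/signature bookkeeping: verifying that the exceptional classes are linearly independent so that $\mathrm{rank}(R) = b_2(S) - 1$, and confirming $h^2 > 0$. Both follow cleanly from the $\mathbb{Q}$-homology projective plane hypothesis together with the negative-definiteness of the exceptional intersection form. The argument is otherwise purely lattice-theoretic; the assumption $K_{\bar{S}} \not\equiv 0$ enters only to guarantee $\alpha \ne 0$, so that $K_{\bar{S}}^2 > 0$ and the identity above is nonempty.
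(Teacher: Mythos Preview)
Your argument is correct. The paper itself does not prove this lemma; it is quoted verbatim from \cite[Lemma~3.3]{HK1} and used as a black box to eliminate the 56 types in Table~\ref{det-table}. What you have written is essentially the standard proof one finds in \cite{HK1}: embed the negative-definite root lattice $R$ into the unimodular lattice $L = H^2(S,\mathbb{Z})_{\mathrm{free}}$, observe that $R^{\perp}$ is rank one and positive, and use the index formula $[L : R \oplus \mathbb{Z}h]^2 = |\det R|\cdot h^2$ together with $f^*K_{\bar S} \in R^{\perp}\otimes\mathbb{Q}$.

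Two small remarks. First, your final identity shows that $|\det R|\cdot K_{\bar S}^2$ is a square in $\mathbb{Q}$; to conclude it is a square \emph{integer} (as needed for the application in Table~\ref{det-table}) you should note, as you do parenthetically, that in the Gorenstein case $K_S = f^*K_{\bar S}$ is integral, so the product is an integer and hence a square integer. Second, the hypothesis $K_{\bar S}\not\equiv 0$ is indeed only there to make the conclusion nonvacuous; your identity holds (trivially, with both sides zero) even when $\alpha = 0$.
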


\begin{table}[ht]
\caption{The 56 types excluded by Lemma \ref{det}}
\label{det-table}
$\begin{array}{||c|c||c|c||}
\hline
\textrm{Singularity Type} & |det(R)| \cdot K^2_{\bar{S}} & \textrm{Singularity Type} & |det(R)| \cdot K^2_{\bar{S}} \\
\hline
A_6 \oplus A_2 & 3 \cdot 7   &      A_2 \oplus 3A_1 & 2^5 \cdot 3 \\
A_6 \oplus 2A_1 &  2^2 \cdot 7   & A_3 \oplus A_1  & 2^3 \cdot 5  \\
A_5 \oplus A_3 & 2^3  \cdot 3   &  2A_2 & 3^2 \cdot 5 \\
A_5 \oplus 3A_1 & 2^4  \cdot 3   & A_2 \oplus 2A_1  & 2^2 \cdot  3 \cdot 5  \\
A_4 \oplus A_3 \oplus A_1 & 2^3  \cdot 5   & 4A_1 & 2^4 \cdot 5   \\
A_4 \oplus 2A_2 & 3^2  \cdot 5   & A_3 & 2^3 \cdot 3 \\
A_4 \oplus A_2 \oplus 2A_1 & 2^2  \cdot 3 \cdot 5   & 3A_1 & 2^4 \cdot 3 \\
2A_3 \oplus A_2 & 2^4  \cdot 3   &  A_2 & 3 \cdot 7\\
A_3 \oplus 2A_2 \oplus A_1 & 2^3  \cdot 3^2   & 2A_1 & 2^2 \cdot 7\\
A_6 \oplus A_1 & 2^2 \cdot 7   &            E_6 \oplus 2A_1 & 2^2 \cdot 3 \\
A_5 \oplus 2A_1 & 2^4 \cdot 3   &          D_7 \oplus A_1 & 2^3 \\
A_4 \oplus A_3 & 2^3 \cdot 5   &           D_6 \oplus A_2 & 2^2 \cdot 3\\
A_4 \oplus A_2 \oplus A_1   & 2^2 \cdot 3 \cdot 5             &  D_5 \oplus A_2 \oplus A_1 & 2^3 \cdot 3\\
A_4 \oplus 3A_1 & 2^4 \cdot 5  & D_5 \oplus 3A_1 & 2^5  \\
A_3 \oplus 2A_2 & 2^3 \cdot 3^2   &    D_4 \oplus A_4 & 2^2 \cdot 5 \\
A_3 \oplus A_2 \oplus 2A_1 & 2^5 \cdot 3   & D_4 \oplus A_3 \oplus A_1 & 2^5 \\
3A_2 \oplus A_1 & 2^2 \cdot 3^3   & D_4 \oplus A_2 \oplus 2A_1 & 2^4 \cdot 3\\
A_6 & 3 \cdot 7 &  E_6 \oplus A_1 & 2^2 \cdot 3\\
A_4 \oplus A_2 &      3^2 \cdot 5  &                        D_7 & 2^3\\
A_4 \oplus 2A_1  & 2^2 \cdot  3 \cdot 5    &               D_5 \oplus  A_2 & 2^3 \cdot 3\\
2A_3  & 2^4 \cdot  3   & D_5 \oplus  2A_1 & 2^5   \\
A_3 \oplus A_2 \oplus A_1 & 2^3 \cdot 3^2 &             D_4 \oplus  A_3 & 2^5    \\
A_3 \oplus 3A_1 & 2^5 \cdot 3 & D_4 \oplus A_2 \oplus A_1 & 2^4 \cdot 3\\
2A_2 \oplus 2A_1 & 2^2 \cdot 3^3   & D_6 & 2^2 \cdot 3\\
A_5 & 2^3 \cdot 3    & D_5 \oplus  A_1 & 2^3 \cdot 3   \\
A_4 \oplus A_1 & 2^3 \cdot 5  & D_4 \oplus  2A_1 & 2^4 \cdot 3   \\
A_3 \oplus A_2  & 2^4 \cdot 3   & D_4 \oplus  A_1 & 2^5 \\
2A_2 \oplus A_1 & 2^3 \cdot 3^2    & D_4 & 2^2 \cdot 5\\
\hline
\end{array}$
\end{table}

It remains to remove the \tab  types in Lemma \ref{lattice}. The cohomology lattice  $H^2(S,  \mathbb{Z})_{free}$ is unimodular by Poincar\'e duality, and of signature $(1,{\rm rank}(R))$.  Any indefinite unimodular lattice is up to isometry determined by its signature and parity (i.e., whether it is odd or even) (cf. \cite{BHPV}, p. 18). 

Suppose that ${\rm rank}(R) = 9$. Then $K_{\bar{S}}\equiv 0$ and the minimal resolution $S$ is an Enriques surface. The cohomology lattice  $H^2(S,  \mathbb{Z})_{free}$ of an Enriques surface is an even unimodular lattice $II_{1,9}$ of signature $(1, 9)$. Thus
$$R \subset  H^2(S,  \mathbb{Z})_{free}  \cong II_{1,9}\cong H \oplus E_8$$
where $H$ is the even unimodular lattice of signature $(1, 1)$, and $E_8$ is the even unimodular lattice of signature $(0, 8)$.

Suppose that ${\rm rank}(R)<9$. Then the minimal resolution $S$ is either a rational surface or a minimal surface of general type with $p_g = 0$. The cohomology lattice $H^2(S,  \mathbb{Z})_{free}$ is odd or even. If it is even, then it must be isometric to the hyperbolic lattice $H$ of signature $(1,1)$, which occurs when $S$ is a minimal rational ruled surface with a section of self intersection $-2$.  Thus
$$R \subset  H^2(S,  \mathbb{Z})_{free}   \cong I_{1, {\rm rank}(R)} \,\,{\rm or}\,\, H,$$
where $I_{1,m}$ denotes an odd unimodular lattice of signature $(1,m)$.

The following lemma completes the proof of Proposition \ref{cl}. 

\begin{lemma}\label{lattice}
\begin{enumerate}
\item Let $R = A_6 \oplus A_3$, $A_6 \oplus 3A_1,$ $A_4 \oplus A_3 \oplus A_2$, $A_4 \oplus 2A_2 \oplus A_1$, $D_7 \oplus A_2$, $D_5 \oplus 2A_2$, $D_5 \oplus A_2 \oplus 2A_1$, $D_4 \oplus A_5$, $D_4 \oplus A_4 \oplus A_1$, $D_4 \oplus A_3 \oplus A_2$, $D_4 \oplus 2A_2 \oplus A_1$, or $E_6 \oplus 3A_1$. Then the lattice $R$ cannot be embedded in the lattice $H \oplus E_8$.
\item Let $R = D_4 \oplus A_2$ or $D_4 \oplus 2A_2$. Then the lattice $R$ cannot be embedded in the lattice $I_{1, L}$ where $L = rank(R)$.
\end{enumerate}
\end{lemma}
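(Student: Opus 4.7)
The strategy for both parts is to use the discriminant quadratic form $q_R\colon A_R\to\mathbb{Q}/2\mathbb{Z}$ of each lattice $R$, together with the standard correspondence between finite-index even overlattices $M\supset R$ and $q_R$-isotropic subgroups $M/R\subset A_R$.

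For (1), every listed lattice has rank $9$ equal to that of $II_{1,9}=H\oplus E_8$, so any embedding $R\hookrightarrow II_{1,9}$ is of finite index, forcing $|\det R|$ to be a perfect square. A short case-by-case arithmetic check eliminates eleven of the twelve candidates --- for instance $|\det(A_6\oplus A_3)|=28$, $|\det(A_6\oplus 3A_1)|=56$, $|\det(D_4\oplus A_5)|=24$, $|\det(E_6\oplus 3A_1)|=24$, and none is a square. Only $R=D_5\oplus 2A_2$ survives, with $|\det R|=36$. An embedding into $II_{1,9}$ would then produce a $q_R$-isotropic subgroup $H\subset A_R=\mathbb{Z}/4\oplus(\mathbb{Z}/3)^2$ of order $6$; such an $H$ must contain the unique element $2g$ of order $2$ in $A_{D_5}=\mathbb{Z}/4$. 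Since the generator $g$ satisfies $q(g)\equiv 5/4\pmod{2\mathbb{Z}}$, one gets $q(2g)=4q(g)\equiv 1\pmod{2\mathbb{Z}}\ne 0$, so no such $H$ exists.

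For (2), the extra input is that $R$ is spanned by the numerical classes of the $(-2)$-exceptional curves on the minimal resolution $S$, and by the adjunction formula each such curve is orthogonal to the canonical class $K_S$. Thus $R\subset K_S^\perp\subset I_{1,L}$, where $L=\operatorname{rank}R\in\{6,8\}$. The class $K_S$ is a characteristic vector of $I_{1,L}$ with $K_S^2=9-L>0$, and the standard lattice-theoretic identification of the orthogonal complement of such a vector (valid for $L\le 8$) gives $K_S^\perp\cong E_L$. It therefore suffices to prove that $D_4\oplus A_2\not\hookrightarrow E_6$ and $D_4\oplus 2A_2\not\hookrightarrow E_8$.

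In each of these reduced embedding problems the ranks already match, so an embedding would be of finite index $\sqrt{|\det R|/|\det E_L|}$, which equals $2$ in the first case and $6$ in the second; such an embedding would provide a $q_R$-isotropic subgroup of $A_R$ of the same order. The decisive observation is that on $A_{D_4}=(\mathbb{Z}/2)^2$ the form $q$ takes value $1\pmod{2\mathbb{Z}}$ on every non-zero element, so $A_{D_4}$ contains no non-trivial $q$-isotropic subgroup. This immediately rules out the required order-$2$ subgroup of $A_{D_4\oplus A_2}$, and also rules out the mandatory order-$2$ component of any order-$6$ isotropic subgroup of $A_{D_4\oplus 2A_2}$. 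The main conceptual step in the whole argument is the reduction $R\subset K_S^\perp\cong E_L$; once this is available, every other verification is a short discriminant-form computation in small finite groups.
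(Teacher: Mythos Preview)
Your argument for part~(1) rests on a rank miscount: the even unimodular lattice $II_{1,9}=H\oplus E_8$ has signature $(1,9)$ and hence rank $10$, not $9$ (indeed $\operatorname{rank}H+\operatorname{rank}E_8=2+8=10$). Consequently an embedding of a rank-$9$ root lattice $R$ into $H\oplus E_8$ is \emph{never} of finite index; its orthogonal complement $R^{\perp}$ is a rank-$1$ lattice $\langle n\rangle$, and unimodularity only forces $|\det R|\cdot |n|$ to be a square, with $n$ a priori undetermined. So the conclusion ``$|\det R|$ is a perfect square'' is unjustified, and the eleven cases you discard on that basis are not in fact eliminated. (For example, $|\det(A_6\oplus A_3)|=28$ is perfectly compatible with $R^{\perp}=\langle 7\rangle$, giving index $14$.) The paper handles part~(1) by a genuinely different mechanism: it compares the Hasse--Witt invariants $\epsilon_p$ of $R\oplus R^{\perp}$ and of $H\oplus E_8$ over $\mathbb{Q}_p$, exploiting that $\epsilon_p(R^{\perp})=1$ for a rank-$1$ form, and finds in each case a prime $p$ with $\epsilon_p(R)=-1$.

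For part~(2) your argument proves something different from what the lemma asserts. The lemma is a pure lattice statement---$R$ does not embed in $I_{1,L}$---whereas you only argue that $R$ does not embed inside the sublattice $K_S^{\perp}\cong E_L$. This is enough for the application (Proposition~\ref{cl}), since in the geometric situation every exceptional $(-2)$-curve is orthogonal to $K_S$; but it does not establish the lemma as written. If you want to keep your discriminant-form approach, you should either reformulate the lemma as ``$R\not\hookrightarrow E_L$'' or supply an argument that any embedding $R\hookrightarrow I_{1,L}$ must land in some $E_L$. The paper instead proves the stated lattice claim directly, again via $\epsilon_p$-invariants: one checks $\epsilon_3(R\oplus R^{\perp})=-1$ while $\epsilon_3(I_{1,L})=1$.
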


\begin{proof}
(1) Suppose that $R$ is embedded in $H \oplus E_8$. Let $R^{\perp}$ be the orthogonal complement of $R$ in $H \oplus E_8$. Then $(R \oplus R^{\perp}) \otimes \mathbb{Q}_p \cong (H \oplus E_8) \otimes \mathbb{Q}_p$ for any prime number $p$. Thus, by \cite[Theorem IV-9]{Serre}, their discriminants must be the same. It follows that $d_p(R)d_p(R^{\perp})  =d_p(R \oplus R^{\perp}) = d_ p(H \oplus E_8) = -1$ for every prime $p$. Therefore $(d_p(R), d_p(R^{\perp})) = 1$ for every prime $p$. Again by   \cite[Theorem IV-9]{Serre}, their epsilon invariants must be the same, i.e.,
  $\epsilon_p(R \oplus R^{\perp}) = \epsilon_p(H \oplus E_8)$ for every prime $p$. Since $\epsilon_p(R^{\perp}) = 1$, $\epsilon_p(R \oplus R^{\perp})= \epsilon_p(R) \epsilon_p( R^{\perp}) (d_p(R), d_p(R^{\perp})) = \epsilon_p(R).$ 
Since $\epsilon_p(H \oplus E_8) = 1$ for any prime $p$,  it is enough to show that $\epsilon_p(R) = -1$ for some prime $p$. 
For the definition and basic properties of epsilon invariant, see \cite[Section 6]{HK1}. 

\medskip
1) The case $A_6 \oplus A_3$\\
In this case, $$\epsilon_7(R) = \epsilon_7(A_6)\epsilon_7(A_3)(d(A_6), d(A_3))_7=1 \cdot 1 \cdot (7, -1)_7 = -1.$$

2) The case $A_6 \oplus 3A_1$\\
Since  $\epsilon_7(A_6 \oplus A_1) = \epsilon_7(A_6)\epsilon_7(A_1)  (d(A_6), d(A_1))_7 = 1 \cdot 1 \cdot (7,-2)_7 = -1$ and $\epsilon_7(2A_1) = (d(A_1), d(A_1))_7 = 1$, we have
$$\epsilon_7(R) = \epsilon_7(A_6 \oplus A_1)\epsilon_7(2A_1)(d(A_6 \oplus A_1), d(2A_1))_7 = (-1) \cdot 1 \cdot (-14, 1)_7= -1.$$

3) The case $A_4 \oplus A_3 \oplus A_2$\\
Since $\epsilon_5(A_4 \oplus A_3) = \epsilon_5(A_4)\epsilon_5(A_3)(d(A_4), d(A_3))_5 = 1 \cdot 1 \cdot (5, -1)_5 = 1$, we have
$$\epsilon_5(R) = \epsilon_5(A_4 \oplus A_3) \epsilon_5(A_2) (d(A_4 \oplus A_3), d(A_2))_5 = 1 \cdot 1 \cdot (-5, 3)_5 = -1.$$
 
4) The case $A_4 \oplus 2A_2 \oplus A_1$\\
Since $\epsilon_3(A_4 \oplus A_1) = \epsilon_3(A_4)\epsilon_3(A_1)(d(A_4), d(A_1))_3 = 1 \cdot 1 \cdot (5, -2)_3 = 1$ and  $\epsilon_3(2A_2) = (d(A_2), d(A_2))_3 = -1$, we have 
$$\epsilon_3(R) = \epsilon_3(A_4 \oplus A_1)\epsilon_3(2A_2) (d(2A_2), d(A_4 \oplus A_1))_3 = 1 \cdot (-1) \cdot 1 = -1.$$

5) The case $D_7 \oplus A_2$\\
In this case,   $$\epsilon_3(R) =  \epsilon_3(D_7) \epsilon_3(A_2)  (d(D_7),  d(A_2))_3 = 1 \cdot 1 \cdot (-1, 3)_3 = -1.$$  

6) The case $D_5 \oplus 2A_2$\\
In this case, 
$$\epsilon_3(R) =  \epsilon_3(D_5) \epsilon_3(2A_2)  (d(D_5),  d(2A_2))_3 = 1 \cdot (-1) \cdot (-1,1)_3 = -1.$$  

7) The case $D_5 \oplus A_2 \oplus 2A_1$\\
Since $\epsilon_3(D_5 \oplus A_2) = \epsilon_3(D_5)\epsilon_3(A_2)(d(D_5), d(A_2))_3 = 1 \cdot 1 \cdot (-1, 3)_3 = -1$ and  $\epsilon_3(2A_1) = (d(A_1), d(A_1))_3 = (-2, -2)_3 = 1$, we have
$$\epsilon_3(R) = \epsilon_3(D_5 \oplus A_2)\epsilon_3(2A_1)  (d(D_5 \oplus A_2), d(2A_1))_3 = (-1) \cdot 1 \cdot 1 = -1.$$

8) The case $D_4 \oplus A_5$\\
In this case, $$\epsilon_3(R) =  \epsilon_3(D_4) \epsilon_3(A_5)  (d(D_4),  d(A_5))_3 = 1 \cdot (-1) \cdot (1, -6)_3 = -1.$$  

9) The case $D_4 \oplus A_4 \oplus A_1$\\
Since $\epsilon_5(D_4 \oplus A_4) = \epsilon_5(D_4) \epsilon_5(A_4) (d(D_4), d(A_4))_5 = 1 \cdot 1 \cdot (1, 5)_5 = 1$, we have 
$$ \epsilon_5(R) = \epsilon_5(D_4 \oplus A_4)\epsilon_5(A_1) (d(D_4 \oplus A_4), d(A_1))_5 = 1 \cdot 1 \cdot (5, -2)_5 = -1.$$

10) The case  $D_4 \oplus A_3 \oplus A_2$\\
Since $\epsilon_3(D_4 \oplus A_2) = \epsilon_3(D_4)\epsilon_3(A_2) (d(D_4), d(A_2))_3 = 1 \cdot 1 \cdot(1, -3)_3 = 1$, we have 
$$ \epsilon_3(R)  = \epsilon_3(D_4 \oplus A_2) \epsilon_3(A_3) (d(D_4 \oplus A_2), d(A_3))_3 = 1 \cdot 1 \cdot (3, -1)_3 = -1.$$

11) The case   $D_4 \oplus 2A_2 \oplus A_1$\\
Since $\epsilon_3(D_4 \oplus A_1) = \epsilon_3(D_4)\epsilon_3(A_1) (d(D_4), d(A_1))_3 = 1 \cdot 1 \cdot 1 = 1$ and  $\epsilon_3(2A_2) = -1$, we have
$$\epsilon_3(R) = \epsilon_3(D_4 \oplus A_1) \epsilon_3(2A_2) (d(D_4 \oplus A_1), d(2A_2))_3 = 1 \cdot (-1) \cdot 1= -1.$$ 

12) The case  $E_6 \oplus 3A_1$\\
Since $\epsilon_3(E_6 \oplus A_1) = \epsilon_3(E_6) \epsilon_3(A_1) (d(E_6), d(A_1))_3 = (-1) \cdot 1 \cdot (3, -2)_3 = -1$ and $\epsilon_3(2A_1) = (d(A_1), d(A_1))_3 = 1$, we have 
$$  \epsilon_3(R) = \epsilon_3(E_6 \oplus A_1)\epsilon_3(2A_1)(d(E_6 \oplus A_1), d(2A_1))_3 = (-1) \cdot 1 \cdot (-6, 1)_3= -1.$$

\medskip
(2) Suppose that $R$ is embedded in $I_{1, L}$. Let $R^{\perp}$ be the orthogonal complement of $R$ in $I_{1, L}$.  Then $(R \oplus R^{\perp}) \otimes \mathbb{Q}_p \cong I_{1, L} \otimes \mathbb{Q}_p$ for each prime $p$. Again by  \cite[Theorem IV-9]{Serre},
  $\epsilon_p(I_{1, L}) = \epsilon_p(R \oplus R^{\perp})$ for any prime $p$.
Since $\epsilon_3(I_{1, L}) = 1$ for any positive integer $L$, it is enough to show that $\epsilon_3(R \oplus R^{\perp}) = -1$.

\medskip
1) The case $D_4 \oplus A_2$\\
Since $\epsilon_3(R) = \epsilon_3(D_4 \oplus A_2) =1$ by the computation in (1), we have
$$\epsilon_3(R \oplus R^{\perp}) = \epsilon_3(R)\epsilon_3(R^{\perp})(d(R), d(R^{\perp}))_3 =
1 \cdot 1 \cdot (3, 3)_3 = -1.$$

2) The case $D_4 \oplus 2A_2$ \\
Since $\epsilon_3(R) = \epsilon_3(D_4 \oplus A_2) \epsilon_3(A_2) (d(D_4\oplus A_2), d(A_2))_3 = 1 \cdot 1 \cdot (3,3) = -1$, we have
$$ \epsilon_3(R \oplus R^{\perp}) = \epsilon_3(R)\epsilon_3(R^{\perp})(d(R), d(R^{\perp}))_3 =
(-1) \cdot 1 \cdot (1, 1)_3 = -1.$$
\end{proof}

\subsection{Supporting Examples.}\label{real}
We complete the proof of Theorem \ref{main} by providing a supporting example for each of the 56 singularity types. 

If $R$ is one of the  \resld singularity types with ${\rm rank}(R)<9$, then there exists a Gorenstein log del Pezzo surface of rank 1 with the given  singularity type $R$ by \cite[Theorem 1.2]{Ye}.

If $R$ is one of the types in  ${\bf L}_{K\equiv 0}$ except the two types  $2A_3 \oplus A_2 \oplus A_1$ and $A_3 \oplus 3A_2$, then it  is supported by an example obtained by contracting suitable nine smooth rational curves on an Enriques surface with finite automorphism group constructed by Kond\={o}(\cite{Kondo}). These examples are listed in  Table \ref{Kondo}, where  we use the same notation for smooth rational curves as in his paper. In Section 3  more impressive examples will be obtained, in a different way, from Enriques surfaces with infinite automorphism group.   

\begin{table}[ht]
\caption{Realization of the 29 singularity types in ${\bf L}_{K\equiv 0}$}\label{Kondo}
$\begin{array}{|c|c|c|}
\hline
\textrm{Singularity} & \textrm{Kond\={o}'s} & \textrm{Nine  smooth rational curves}\\
\textrm{Type} & \textrm{Example} & \\
\hline
A_9 & \textrm{Type VII} & (E_1, E_2, E_3, E_4, E_5, E_6, E_7, E_8, E_{12})  \\
A_8 \oplus A_1 & \textrm{Type I} &  (F_1, F_2, F_3, F_4, F_5, F_6, F_7, F_{10}); (F_{11}) \\
A_7 \oplus A_2 & \textrm{Type V} & (E_1, E_2, E_3, E_4, E_5, E_7, E_8); (E_{14}, E_{15})  \\
A_7 \oplus 2A_1 & \textrm{Type I} & (F_1, F_2, F_3, F_4, F_5, F_6, F_8); (F_{10}); (F_{11}) \\
A_6 \oplus A_2 \oplus A_1 & \textrm{Type V}  & (E_1, E_2, E_3, E_4, E_5, E_7); (E_{14}, E_{15}); (E_{11})   \\
A_5 \oplus A_4 &  \textrm{Type VI} & (E_1, E_2, E_3, E_4, E_5); (E_{11}, E_{14}, E_{15}, E_{19}) \\
A_5 \oplus A_3 \oplus A_1 & \textrm{Type VI}  & (E_1, E_2, E_3, E_4, E_5); (E_{14}, E_{15}, E_{19}); (E_8)\\
A_5 \oplus 2A_2 &  \textrm{Type V} & (E_1, E_2, E_3, E_4, E_6); (E_7, E_8); (E_{14}, E_{15}) \\
A_5 \oplus A_2 \oplus 2A_1 &  \textrm{Type V} & (E_1, E_2, E_3, E_4, E_6); (E_{14}, E_{15}); (E_7); (E_{11}) \\
2A_4 \oplus A_1 & \textrm{Type VI}  & (E_1, E_2, E_3, E_4); (E_{13}, E_{14}, E_{15}, E_{19}); (E_8) \\
A_4 \oplus A_3 \oplus 2A_1 & \textrm{Type III}  & (E_1, E_2, E_3, E_4); (E_6, E_7, E_{12}); (E_{13}); (E_{14}) \\
3A_3 & \textrm{Type II} & (F_1, F_2, F_4); (F_5, F_6, F_8); (F_9, F_{10}, F_{12}) \\
2A_3 \oplus 3A_1 & \textrm{Type III}   & (E_1, E_2, E_9); (E_6, E_7, E_{12}); (E_4); (E_{13}); (E_{14})   \\
D_9 & \textrm{Type II}  & (F_1, F_2, F_3, F_5, F_6, F_7, F_9, F_{10}, F_{12}) \\
D_8 \oplus A_1 & \textrm{Type I} & (F_1, F_2, F_3, F_4, F_6, F_7, F_8, F_9); (F_{12}) \\
D_7 \oplus 2A_1 & \textrm{Type III}  & (E_1, E_2, E_3, E_4, E_5, E_6, E_{12});  (E_{13}); (E_{14}) \\
D_6 \oplus A_3 & \textrm{Type II}  & (F_1, F_2, F_3, F_5, F_6, F_8); (F_9, F_{10}, F_{12})  \\
D_6 \oplus A_2 \oplus A_1 & \textrm{Type V}  & (E_1, E_2, E_3, E_4, E_6, E_9); (E_{15}, E_{19}); (E_{11}) \\
D_6 \oplus 3A_1 & \textrm{Type V}  & (E_1, E_2, E_3, E_4, E_6, E_9); (E_7); (E_{11}); (E_{15}) \\
D_5 \oplus A_4 &  \textrm{Type IV}  &  (E_1, E_2, E_3, E_{16}, E_{20}); (E_5, E_6, E_9, E_{17}) \\
D_5 \oplus A_3 \oplus A_1 & \textrm{Type IV}  & (E_5, E_6, E_7, E_9, E_{17});  (E_1, E_2, E_3); (E_{20}) \\
D_5 \oplus D_4 & \textrm{Type IV}  & (E_5, E_6, E_9, E_{17}, E_{18}); (E_1, E_2, E_3, E_{16}) \\
D_4 \oplus A_3 \oplus 2A_1 & \textrm{Type III}  & (E_4, E_5, E_6, E_{12}); (E_1, E_2, E_9);  (E_{13}); (E_{14}) \\
2D_4 \oplus A_1 & \textrm{Type III}  & (E_1, E_2, E_8, E_9); (E_4, E_5, E_6, E_{12}); (E_{14})  \\
E_8 \oplus A_1 & \textrm{Type I} & (F_1, F_2, F_3, F_4, F_5, F_6, F_7, F_9); (F_{12})   \\
E_7 \oplus A_2 & \textrm{Type V}   & (E_1, E_2, E_3, E_4, E_5, E_7, E_9); (E_{15}, E_{17}) \\
E_7 \oplus 2A_1 & \textrm{Type V}  & (E_1, E_2, E_3, E_4, E_5, E_7, E_9); (E_{11}); (E_{15})   \\
E_6 \oplus A_3 & \textrm{Type VI}  & (E_1, E_2, E_3, E_4, E_5, E_9); (E_{11}, E_{16}, E_{19}) \\
E_6 \oplus A_2 \oplus A_1 & \textrm{Type V}  & (E_1, E_2, E_3, E_4, E_5, E_9); (E_{15}, E_{19}); (E_{11})  \\
\hline
\end{array}$
\end{table}

\begin{remark}
Neither the type $2A_3 \oplus A_2 \oplus A_1$ nor $A_3 \oplus 3A_2$ can be obtained from  Kond\={o}'s examples of Enriques surfaces.  In other words, the 29 types in Table \ref{Kondo} are the only types that can be obtained from  Kond\={o}'s examples.
This can be checked by using computer algebra system, e.g., Maple. The main steps for the algorithm are as follows: for each of the 8 types of Kond\={o}'s examples,
\begin{enumerate}
\item generate the dual graph $G$ of the intersection matrix of all $(-2)$-curves on the surface (an Enriques surface with finite automorphism group has finitely many $(-2)$- curves),
\item for each set $V$ of 9 vertices of $G$, check if the subgraph $H$ of $G$ generated by $V$ is a disjoint union of graphs of Dynkin type $A_n$, $D_n$, or $E_n$. 
\item collect, up to graph isomorphism, all subgraphs $H$ that survived the step (2).
\end{enumerate}

If $2A_3 \oplus A_2 \oplus A_1$ or $A_3 \oplus 3A_2$ exists, then it must  realize on an Enriques surface with infinite automorphism group.
\end{remark}

\begin{remark} Both lattices $2A_3 \oplus A_2 \oplus A_1$ and $A_3 \oplus 3A_2$ can be embedded into the Enriques lattice $H\oplus E_8$. More precisely, $$2A_3 \oplus A_1 \oplus A_2\subset E_7\oplus A_2\subset H\oplus E_8,$$ $$A_3\oplus 3A_2\subset A_3\oplus E_6\subset  H\oplus E_8.$$ In fact, these are well-known.  To show $2A_3 \oplus A_1\subset E_7$, let $e_1, e_2, \ldots, e_7$ be  $(-2)$-vectors  generating the lattice $E_7$, whose Dynkin diagram is given by 

 $$
\begin{picture}(100,40)
\put(0,25){$e_1-e_2-e_3-e_4-e_5-e_6$}
 \put(50,15){\line(0,1){6}}
 \put(45,5){$e_7$}
\end{picture}
$$
Take $e_0= -2e_1-3e_2-4e_3-3e_4-2e_5-e_6-2e_7.$ Then $e_0\in E_7$, $e_0^2=-2$, and we have the following extended Dynkin diagram 

 $$
\begin{picture}(100,40)
\put(0,25){$e_0-e_1-e_2-e_3-e_4-e_5-e_6$}
 \put(70,15){\line(0,1){6}}
 \put(65,5){$e_7$}
\end{picture}
$$
which obviously contains the Dynkin diagram $2A_3 \oplus A_1$. To show $3A_2\subset E_6$, let $e_1, e_2, \ldots, e_6$ be  $(-2)$-vectors  generating the lattice $E_6$, with Dynkin diagram 

 $$
\begin{picture}(100,40)
\put(0,25){$e_1-e_2-e_3-e_4-e_5$}
 \put(50,15){\line(0,1){6}}
 \put(45,5){$e_6$}
\end{picture}
$$
Take $e_0= -e_1-2e_2-3e_3-2e_4-e_5-2e_6.$ Then $e_0\in E_6$, $e_0^2=-2$, and we have the following extended Dynkin diagram 

 $$
\begin{picture}(100,40)
\put(0,25){$e_1-e_2-e_3-e_4-e_5$}
 \put(50,15){\line(0,1){6}}
 \put(45,5){$e_6$}
 \put(50,-5){\line(0,1){6}}
 \put(45,-15){$e_0$}
\end{picture}
$$

\bigskip\bigskip \noindent
which obviously contains the Dynkin diagram $3A_2$.
\end{remark}

\section{Examples from Enriques surfaces with infinite automorphism group}\label{example}

In this section, we give further examples of Gorenstein $\Q$-homology projective planes
with numerically trivial canonical divisor. In this case, the minimal resolution  is an Enriques surface. In the previous section
we have already seen plenty of such examples arising from Enriques surfaces with finite automorphism group.
Here we construct new ones from Enriques surfaces with infinite automorphism group. We summarize the result as follows.

\begin{theorem}\label{ex-thm} Let $R$ be one of the two types $D_8 \oplus A_1$, $E_7 \oplus 2A_1$ in the list
${\bf L}_{K \equiv 0}$. 
Then there exists a one-dimensional family of Gorenstein $\Q$-homology projective planes with singularity type $R$ such that the minimal resolution of a very general member is an Enriques surface
with infinite automorphism group.
\end{theorem}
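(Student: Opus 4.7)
The plan is, for each of the two singularity types $R\in\{D_8\oplus A_1,\ E_7\oplus 2A_1\}$, to construct an explicit one-parameter family of Enriques surfaces $Y$ carrying a disjoint union of nine smooth rational curves whose dual graph is the Dynkin diagram of $R$, and then to contract them. The lattice embedding $R\hookrightarrow H\oplus E_8$ is not an issue here: both types already appear on Kond\={o}'s examples listed in Table \ref{Kondo}, so an abstract primitive embedding into the Enriques lattice certainly exists. What is new is a positive-dimensional family, whose very general member must then have infinite automorphism group by Kond\={o}'s classification of Enriques surfaces with finite automorphism group.

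The natural construction starts from a genus-one fibration $f\colon Y\to\P^1$ with a prescribed reducible fiber. A fiber of Kodaira type $I_4^*$ contributes the affine Dynkin diagram $\widetilde D_8$ with nine components, and deleting one end component yields $D_8$; similarly a fiber of Kodaira type $III^*$ produces $\widetilde E_7$, hence $E_7$ after deleting one component. The extra $A_1$ (respectively $2A_1$) summand must come from $(-2)$-curves disjoint from the chosen fiber configuration, for instance components of a second reducible fiber or a suitable bisection. One obtains such Enriques surfaces concretely by quotienting elliptic K3 surfaces carrying the prescribed Kodaira fibers by a fixed-point-free involution that preserves the fibration; the $j$-invariant of the generic fiber (or an equivalent cross-ratio parameter on the base) provides a one-parameter deformation. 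Equivalently, via the period map, the locus in the $10$-dimensional Enriques moduli space on which the nine $(-2)$-classes are simultaneously represented by smooth rational curves has expected codimension $\mathrm{rk}(R)=9$, and non-emptiness is guaranteed by the Kond\={o} example lying in its closure, so the locus is at least one-dimensional.

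Once the family is constructed, contracting the nine $(-2)$-curves yields, by the argument of Proposition \ref{cl}, a normal Gorenstein surface $\bar Y$ with rational double points of type $R$, Picard number one and numerically trivial canonical class, i.e.\ a Gorenstein $\mathbb Q$-homology projective plane in ${\bf L}_{K\equiv 0}$. Infinitude of the automorphism group of a very general member is then automatic, since Kond\={o}'s list of Enriques surfaces with finite automorphism group is zero-dimensional. The hardest step will be the second one: exhibiting the one-parameter family explicitly and verifying that the ``extra'' $A_1$ or $2A_1$ curves are genuinely disjoint from the $D_8$ or $E_7$ configuration, and that the nine intended $(-2)$-classes stay represented by smooth rational curves with the correct intersection pattern on a very general member, rather than acquiring unintended incidences or collapsing on a proper subfamily.
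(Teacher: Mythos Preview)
Your outline is headed in a reasonable direction, but it contains a genuine gap in the final step. The claim that ``Kond\={o}'s list of Enriques surfaces with finite automorphism group is zero-dimensional'' is false: Kond\={o}'s Types I and II each form a one-dimensional family. Worse, Table~\ref{Kondo} shows that the very configuration $D_8\oplus A_1$ you are after is realized on Kond\={o}'s Type~I surfaces. Thus a one-parameter family of Enriques surfaces carrying a $D_8\oplus A_1$ configuration could \emph{a priori} coincide with Kond\={o}'s Type~I family, every member of which has finite automorphism group. Dimension counting alone therefore cannot establish the ``infinite automorphism group'' conclusion; you must exhibit a family that is genuinely transverse to, or disjoint from, Kond\={o}'s loci. (The moduli heuristic ``expected codimension $9$ in a $10$-dimensional space, non-empty by the Kond\={o} point'' is also not a proof: expected codimension is not actual codimension without further argument, and in any case the resulting curve could lie inside a Kond\={o} family.)

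The paper circumvents this by an entirely explicit construction: it takes the Kummer surface $X=Km(E_\lambda\times E_\lambda)$ of self-product type, computes its transcendental lattice for very general $\lambda$ (Gram matrix $\mathrm{diag}(H(2),4)$, see Proposition~\ref{infinite_aut}), and checks directly against Kond\={o}'s classification that no Enriques quotient of such $X$ can have finite automorphism group. It then writes down two specific free involutions on $X$ (a Lieberman involution and a Kond\={o}--Mukai involution), tracks the images of the double Kummer curves and some auxiliary $(1,1)$-curves on the quotient Enriques surfaces $S_1,S_2$, and reads off the nine disjoint $(-2)$-curves of type $D_8\oplus A_1$ and $E_7\oplus 2A_1$ respectively. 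If you want to salvage your elliptic-fibration approach, you will likewise need an invariant (e.g.\ the transcendental lattice of the K3 cover, or the full configuration of nodal curves) that separates your family from Kond\={o}'s one-dimensional Types I and II.
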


\begin{remark}
Any Gorenstein $\mathbb{Q}$-homology projective plane is an example of a Mori dream space, namely its Cox ring is finitely generated. Is the minimal resolution  of a $\mathbb{Q}$-homology projective plane also a Mori dream space? The finite generation of the Cox ring of $S$ implies the finiteness of the automorphism group $\mathrm{Aut}(S)$, so Theorem \ref{ex-thm} gives a negative answer to the question. The question is still open for rational surfaces (see \cite[Question 6.7]{HP}).
\end{remark}

Our Enriques surfaces arise from Kummer surfaces of special type.
It is known that a very general Kummer surface of product type $Km(E\times F)$
cover essentially two kinds of Enriques surfaces, namely the quotients by
{\em{Lieberman involutions}} and {\em{Kondo-Mukai involutions}} \cite{Ohashi07}.
The idea is to specialize them to the case of self-product type $Km(E\times E)$. \\

Now let the elliptic curve $E_{\lambda}$ be
\[E_{\lambda} \colon y^2=x(x-1)(x-\lambda ),\ (\lambda\neq 0,1)\]
defined in the Legendre form. To the abelian surface $A=E_{\lambda}\times E_{\lambda}$,
we can associate the {\em{Kummer surface of self-product type}}
$X=Km(A)$ as the minimal resolution of the quotient surface
$\overline{X}=A/(-1)_A$. Since the double covering
$E_{\lambda}\rightarrow E_{\lambda}/(-1)_{E_{\lambda}}\simeq \P^1$ branches at the
four points $x= 0,1,\lambda,\infty$, the natural map $\overline{X}\rightarrow
(E_{\lambda}/(-1)_{E_{\lambda}})\times (E_{\lambda}/(-1)_{E_{\lambda}})=\P^1\times \P^1=R$
is the double covering branched along the eight lines
\begin{equation}
\begin{split}
l_1\colon & x=0,\ l_2\colon x=1,\ l_3\colon x=\lambda,\ l_4\colon x=\infty, \\
l'_1\colon & x'=0,\ l'_2\colon x'=1,\ l'_3\colon x'=\lambda,\ l'_4\colon x'=\infty
\end{split}
\end{equation}
where $x$ and $x'$ are the inhomogeneous coordinates of two $\P^1$s.
This gives the another visible construction of $X$ as the desingularization of the
double covering of $R$.
We write $l_i\cap l'_j=\{p_{ij}\}$. See Figure \ref{24} and also \cite[Section 3]{Fields}.
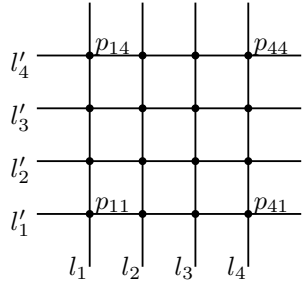
\begin{figure}[h]
\centering
\begin{picture}(100,100)
\linethickness{0.7pt}
\put(0,20){\line(1,0){100}}\put(-10,14){$l'_1$}
\put(0,40){\line(1,0){100}}\put(-10,34){$l'_2$}
\put(0,60){\line(1,0){100}}\put(-10,54){$l'_3$}
\put(0,80){\line(1,0){100}}\put(-10,74){$l'_4$}
\put(20,0){\line(0,1){100}}\put(12,-3){$l_1$}
\put(40,0){\line(0,1){100}}\put(32,-3){$l_2$}
\put(60,0){\line(0,1){100}}\put(52,-3){$l_3$}
\put(80,0){\line(0,1){100}}\put(72,-3){$l_4$}
\put(20,20){\circle*{3}}
\put(20,40){\circle*{3}}
\put(20,60){\circle*{3}}
\put(20,80){\circle*{3}}
\put(40,20){\circle*{3}}
\put(40,40){\circle*{3}}
\put(40,60){\circle*{3}}
\put(40,80){\circle*{3}}
\put(60,20){\circle*{3}}
\put(60,40){\circle*{3}}
\put(60,60){\circle*{3}}
\put(60,80){\circle*{3}}
\put(80,20){\circle*{3}}
\put(80,40){\circle*{3}}
\put(80,60){\circle*{3}}
\put(80,80){\circle*{3}}
\put(22,22){$p_{11}$}
\put(82,82){$p_{44}$}
\put(82,22){$p_{41}$}
\put(22,82){$p_{14}$}
\end{picture}
\caption{Branches of $\overline{X}\rightarrow \P^1\times \P^1$}
\label{24}
\end{figure}
We denote by $L_i$ and $L_i'$ the smooth rational curves on $X$
dominating $l_i$ and $l'_i$ respectively. We denote by $E_{ij}$ the exceptional curve on $X$
which resolves the node on $\overline{X}$ corresponding to the point $p_{ij}$.
Thus we get $24$ smooth rational curves on $X$, which are often called
{\em{the double Kummer configuration}} of $X$.

By our choice of the abelian surface $A$, we can obtain further
rational curves on $X$.
Let us look at the four curves of bidegree $(1,1)$ on $R$ defined by the following equations.
\begin{equation}\label{11}
\begin{split}
c_1 &\colon x'=x\quad \text{(through $p_{11},p_{22},p_{33},p_{44}$.)}\\
c_2&\colon xx'=\lambda\quad \text{(through $p_{14},p_{23},p_{32},p_{41}$.)}\\
c_3&\colon (x-1)x'=x-\lambda\quad \text{(through $p_{13},p_{24},p_{31},p_{42}$.)}\\
c_4 &\colon (x-\lambda)x'=\lambda(x-1)\quad \text{(through $p_{12},p_{21},p_{34},p_{43}$.)}
\end{split}
\end{equation}
The special property owned by them is that each $c_k$ passes through four of the intersections $p_{ij}$ as indicated. Therefore, after discarding exceptional curves,
the inverse image of $c_k$ on $X$ decomposes into two smooth rational curves $C_k$ and $C_k'$.
We obtain eight additional smooth rational curves on $X$ in this way.
Their configuration is partially described as follows.
\begin{lemma}\label{splitting}
The curves $C_k$ and $C'_k$ are disjoint for $k=1,\dots,4$.
Let $k_1\neq k_2$. Then, the four curves $C_{k_1}, C'_{k_1}, C_{k_2}, C'_{k_2}$ form a
disjoint union of two Kodaira fibers of type $I_2$.
\end{lemma}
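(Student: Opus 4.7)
The plan is to lift the configuration to the abelian surface $A = E_\lambda \times E_\lambda$, where the branch structure of the double cover $\overline{X} \to R$ becomes transparent, compute intersections there, and then descend through the quotient $A \to \overline{X}$ and the minimal resolution $X \to \overline{X}$. First I would identify the preimages of the $c_k$ in $A$ explicitly. Using the classical identities for how the Legendre $x$-coordinate transforms under translation by a nonzero 2-torsion point of $E_\lambda$ (for instance $x(t+e_1) = \lambda/x(t)$ for $e_1 = (0,0)$, with analogous formulas for $e_2 = (1,0)$ and $e_3 = (\lambda,0)$), the four equations in (\ref{11}) become $x(t') = x(t + e(k))$ for specific $e(k) \in E_\lambda[2]$, one value per $k$. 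Hence the preimage of $c_k$ in $A$ is $\Delta_{e(k)} \cup \Delta'_{e(k)}$, where $\Delta_e = \{(t, t+e) : t \in E_\lambda\}$ and $\Delta'_e = \{(t, -t+e) : t \in E_\lambda\}$.

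The step most easily botched is the next one: since $e$ is $2$-torsion, each of $\Delta_e$ and $\Delta'_e$ is preserved \emph{setwise} by $(-1)_A$ (not swapped with each other), with the restriction being the $-1$ on the underlying elliptic curve. Consequently each descends to a smooth rational curve on $\overline{X}$, and the $C_k, C'_k$ of the lemma are exactly their proper transforms on $X$. Granting this, the rest is a direct computation on $A$: $\Delta_e \cap \Delta_{e'} = \emptyset = \Delta'_e \cap \Delta'_{e'}$ for $e \neq e'$; the intersection $\Delta_e \cap \Delta'_{e'}$ for $e \neq e'$ consists of the four transverse solutions of $2t = e' - e$, which lie in $E_\lambda[4] \setminus E_\lambda[2]$ and so are not $2$-torsion points of $A$; while $\Delta_e \cap \Delta'_e$ equals $\{(t, t+e) : t \in E_\lambda[2]\}$, four $2$-torsion points of $A$.

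To finish, I would push these down. The points of $\Delta_e \cap \Delta'_e$ sit above the four singular points of $\overline{X}$ on the image of $\Delta_e$, so after blowing up these $A_1$-singularities the proper transforms $C_k$ and $C'_k$ separate, giving $C_k \cdot C'_k = 0$. For $k_1 \neq k_2$, the four points of $\Delta_{e(k_1)} \cap \Delta'_{e(k_2)}$ are non-$2$-torsion and paired by $(-1)_A$, descending to two smooth points of $\overline{X}$ that persist unchanged on $X$; combined with the empty intersections above this yields $C_{k_1} \cdot C'_{k_2} = C'_{k_1} \cdot C_{k_2} = 2$ and $C_{k_1} \cdot C_{k_2} = C'_{k_1} \cdot C'_{k_2} = 0$. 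Since each of $C_k$ and $C'_k$ is a smooth rational curve on the K3 surface $X$, hence a $(-2)$-curve, the two pairs $\{C_{k_1}, C'_{k_2}\}$ and $\{C'_{k_1}, C_{k_2}\}$ each form a Kodaira fiber of type $I_2$ and are mutually disjoint, completing the proof.
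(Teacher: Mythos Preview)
Your argument is correct, but it proceeds quite differently from the paper's. The paper observes that for each pair $k_1\neq k_2$ the divisor $c_{k_1}+c_{k_2}$ lies in the pencil $|\mathcal{O}_R(2,2)|$ through the eight points $p_{ij}$ it meets, together with two reducible members $f_1,f_2$ built from the branch lines $l_i,l'_j$; this pencil defines a rational elliptic surface $\hat{R}$, and $X$ is the double cover of $\hat{R}$ branched exactly over the fibers $f_1,f_2$. Since $c_{k_1}+c_{k_2}$ is an $I_2$ fiber of $\hat{R}$ disjoint from the branch locus, it splits on $X$ into two $I_2$ fibers, which is the assertion. Your route instead lifts everything to $A=E_\lambda\times E_\lambda$, identifies the two sheets over $c_k$ with a diagonal translate $\Delta_{e(k)}$ and an anti-diagonal translate $\Delta'_{e(k)}$, and computes all intersections there before descending through $A\to\overline{X}\to X$. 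This is more hands-on and requires tracking the behavior at the sixteen nodes (you do this correctly: the distinct tangent directions of $\Delta_e$ and $\Delta'_e$ at a $2$-torsion point guarantee separation after resolving the $A_1$), but it has the advantage of giving explicit labels for $C_k$ versus $C'_k$ and all pairwise intersection numbers at once. The paper's argument, by contrast, is shorter and simultaneously exhibits the elliptic fibration on $X$ for which these $I_2$'s are genuine fibers, which is conceptually cleaner even if it leaves the individual intersection numbers implicit.
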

\begin{proof}
The first assertion is clear. We prove the second statement for $(k_1, k_2)=(1,2)$. The proof
for other pairs is similar.

The divisor $f=c_1+c_2$ has bidegree $(2,2)$ on $R$ and
passes through eight points $p_{11}, p_{14}, p_{22}, \dots, p_{44}$.
On the other hand, the divisors $f_1=l_1+l_4+l'_2+l'_3$ and $f_2=l_2+l_3+l'_1+l'_4$ also satisfy the
same conditions. Therefore these three divisors are in the elliptic pencil
\[\mathcal{L}=|\mathcal{O}_R(2,2)-p_{11}-p_{14}-p_{22}-p_{23}-p_{32}-p_{33}-p_{41}-p_{44}|.\]
It defines the rational elliptic surface $\hat{R}\rightarrow \mathbb{P}^1$. The surface $X$ is
nothing but (the resolution of) the double cover of $\hat{R}$ branched along the
two fibers $f_1$ and $f_2$. Thus, the fiber $f$ decomposes into two $I_2$ fibers on $X$
by this double cover, as stated.
\end{proof}

From now on we assume that $\lambda$ is very general.
\begin{proposition}\label{infinite_aut}
If $\lambda$ is very general, then the transcendental lattice of the Kummer surface $X$ has the Gram matrix
\[\begin{pmatrix}
0 & 2 & 0 \\ 2 & 0 & 0 \\ 0 & 0 & 4
\end{pmatrix}.\]
Therefore {\em{any}} Enriques surface covered by $X$ has an infinite automorphism group.
\end{proposition}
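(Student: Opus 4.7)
The plan is to (i) compute $\mathrm{NS}(A)$ for $A = E_\lambda \times E_\lambda$ when $\lambda$ is very general, (ii) determine $T(A)$ as its orthogonal complement in $H^2(A, \Z)$, (iii) pass to $T(X)$ via the classical Kummer construction, and (iv) apply the classification of Enriques surfaces with finite automorphism group to conclude.

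For very general $\lambda$ the curve $E_\lambda$ has $\mathrm{End}(E_\lambda) = \Z$, so $\mathrm{NS}(A)$ has rank $3$, generated by the two elliptic fibers $F_1 = \{0\} \times E_\lambda$, $F_2 = E_\lambda \times \{0\}$, and the diagonal $\Delta$. All self-intersections vanish and pairwise intersections equal $1$, so
\[
\mathrm{NS}(A) \cong \begin{pmatrix} 0 & 1 & 1 \\ 1 & 0 & 1 \\ 1 & 1 & 0 \end{pmatrix},
\]
a lattice of signature $(1, 2)$ and discriminant $2$. Since $H^2(A, \Z) \cong 3U$ is even unimodular, $T(A) = \mathrm{NS}(A)^{\perp}$ is an even indefinite rank-$3$ lattice of signature $(2, 1)$ and discriminant $2$. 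Such a lattice is unique in its genus by Nikulin's uniqueness theorem for indefinite lattices of rank $\ge 3$, and a direct check of the discriminant form confirms
\[
T(A) \cong U \oplus \langle 2 \rangle \cong \begin{pmatrix} 0 & 1 & 0 \\ 1 & 0 & 0 \\ 0 & 0 & 2 \end{pmatrix}.
\]
The classical Kummer identity $T(\mathrm{Km}(A)) \cong T(A)(2)$ then yields precisely the Gram matrix asserted for $T(X)$.

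For the second assertion, recall that any Enriques surface $Y$ covered by $X$ has transcendental lattice canonically isometric to $T(X)$, realized as the $(-1)$-eigenspace of the covering involution on the transcendental part. By Kondo's classification \cite{Kondo}, extending Nikulin's earlier work, there are only seven families of complex Enriques surfaces with finite automorphism group, and the transcendental lattices of the corresponding K3 covers are all tabulated there; none of them is isometric to $T(A)(2)$. Therefore every Enriques surface covered by $X$ must have infinite automorphism group.

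The main obstacle is step (ii): while the rank, signature, and discriminant of $T(A)$ are immediate, pinning down the precise isometry class requires the lattice-theoretic uniqueness argument (or, alternatively, exhibiting explicit generators of the orthogonal complement in $3U$ and computing their pairings). Step (iv) is a finite comparison and is facilitated by the fact that the K3 covers of Kondo's seven families have very restricted Picard lattices, making the check against our specific $T(X)$ essentially immediate.
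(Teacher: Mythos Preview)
Your proof is correct and follows essentially the same route as the paper: compute $\mathrm{NS}(A)$ from the two fibers and the diagonal, deduce $T(A)\cong U\oplus\langle 2\rangle$, double via the Kummer construction, and appeal to Kond\={o}'s classification. The only cosmetic difference is that the paper first identifies $\mathrm{NS}(A)\cong U\oplus A_1$ and reads off $T(A)$ from that, whereas you go directly to $T(A)$ via signature/discriminant and genus uniqueness; these are equivalent one-line arguments.
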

\begin{proof}
If $\lambda$ is very general, the abelian surface $A=E_{\lambda}\times E_{\lambda}$ has Picard number $3$. It is generated by the axes $E_{\lambda}\times \{0\}$ and
$\{0\}\times E_{\lambda}$ and the diagonal $\Delta =\{(a,a)\mid a\in E_{\lambda}\}$.
Their intersection numbers are given by the following Gram matrix
\[\begin{pmatrix}
0 & 1 & 1 \\ 1 & 0 & 1 \\ 1 & 1 & 0
\end{pmatrix}\]
and the Neron-Severi lattice of $A$ is isomorphic to $U\oplus A_1$
(where $U$ is the hyperbolic plane and $A_1=\langle -2\rangle$). Therefore the transcendental lattice of $A$
is isomorphic to $U\oplus \langle 2 \rangle$.
By passing to the Kummer surface, the transcendental lattice is multiplied by two.
Thus we get the result. The last sentence follows from the classification of
Kondo \cite{Kondo} of Enriques surfaces with finite automorphism groups.
\end{proof}
In what follows we give two kinds of Enriques quotients of $X$. \\

\noindent {\bf{The Lieberman involution}}.\quad
Let us give the first free involution $\varepsilon_1$ on $X$,
which gives the Enriques quotient $S_1$.
Recall that the points $p_{ij}$ are in one-to-one correspondence with $2$-torsion points of $A$. The zero element is at $p_{44}$. We consider the translation automorphism
$\tau\in \mathrm{Aut}(A)$ defined by the $2$-torsion point
corresponding to $p_{33}$. Then we can see that the composite
$\tau \circ (1_{E_{\lambda}},-1_{E_{\lambda}})\in\mathrm{Aut}(A)$
descends to $\overline{X}$ and to $X$ to give a fixed-point-free involution $\varepsilon_1$.
This is one of what is known as the {\em{Lieberman involution}} \cite{MN, Ohashi07}.
As the quotient, we obtain the Enriques surface $S_1=X/\varepsilon_1$.
In terms of the
double covering $\overline{X}\rightarrow R$, $\varepsilon_1$ is one of the two lifts of the small
involution
\[(x,x')\mapsto \left(\frac{\lambda(x-1)}{x-\lambda},\frac{\lambda(x'-1)}{x'-\lambda}\right). \]
To see how $\varepsilon_1$ permutes smooth rational curves on $X$,
we set $\sigma=(12)(34)\in \mathfrak{S}_4$. The following facts can be verified.
\begin{itemize}
\item The involution $\varepsilon_1$ permutes the curves $L_i,L_i', E_{ij}$
by the action of $\sigma$ on indices; $\varepsilon_1 (L_1)=L_2, \varepsilon_1
(E_{13})=E_{24}$ for example.
\item The involution $\varepsilon_1$ exchanges $C_k$ and $C'_k$ since $C_k\cup C_k'$ is preserved and $\varepsilon_1$ exchanges the two sheets of $\overline{X}/R$.
\end{itemize}
From these facts, we can draw the dual graph of smooth rational curves on the Enriques quotient $S_1=X/\varepsilon_1$.
We denote by $L_{12},L_{34},L'_{12},L'_{34}$ the rational curves on $S_1$ corresponding to
$L_1+L_2, L_3+L_4, L'_1+L'_2, L'_3+L_4'$ on $X$ respectively.
We denote by $F_{ij}$ the rational curve on $S_1$
corresponding to $E_{ij}+E_{\sigma(i)\sigma(j)}$ (our choice is such that $i<\sigma (i)$).
Also we denote by $D_k$ the curve on $S_1$ corresponding to $C_k+C_k'$.
The dual graph of the the set of curves $\{L_{ij},L'_{ij},F_{ij}\}$ is depicted in Figure \ref{12A}.

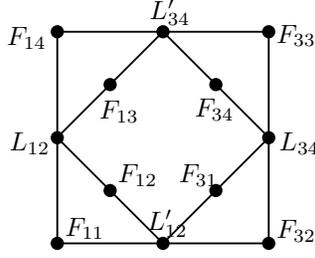
\begin{figure}[h]
\centering
\begin{picture}(80,80)
\linethickness{0.7pt}
\put(0,0){\line(1,0){80}}
\put(0,0){\line(0,1){80}}
\put(80,0){\line(0,1){80}}
\put(0,80){\line(1,0){80}}
\put(40,0){\line(1,1){40}}
\put(40,0){\line(-1,1){40}}
\put(40,80){\line(1,-1){40}}
\put(40,80){\line(-1,-1){40}}
\put(0,0){\circle*{5}}\put(3,3){$F_{11}$}
\put(0,40){\circle*{5}}\put(-18,35){$L_{12}$}
\put(0,80){\circle*{5}}\put(-19,75){$F_{14}$}
\put(20,20){\circle*{5}}\put(23,23){$F_{12}$}
\put(20,60){\circle*{5}}\put(16,47){$F_{13}$}
\put(40,0){\circle*{5}}\put(34,5){$L_{12}'$}
\put(40,80){\circle*{5}}\put(35,85){$L_{34}'$}
\put(60,20){\circle*{5}}\put(46,23){$F_{31}$}
\put(60,60){\circle*{5}}\put(52,48){$F_{34}$}
\put(80,0){\circle*{5}}\put(83,2){$F_{32}$}
\put(80,40){\circle*{5}}\put(84,35){$L_{34}$}
\put(80,80){\circle*{5}}\put(83,76){$F_{33}$}
\end{picture}
\caption{Smooth rational curves on $S_1$}
\label{12A}
\end{figure}
On the other hand, by using Lemma \ref{splitting}, we see that
the dual graph $K_4'$ of the curves
$D_k\ (k=1,\dots,4)$ is the complete graph in four vertices
with edges doubled. There are edges connecting Figure \ref{12A} and $K_4'$ as follows:
each $D_k$ has two double edges connecting to
two vertices in Figure \ref{12A}, which can be detected from \eqref{11}.
To be explicit, we have
\begin{equation}\label{12A+K4'}
\begin{split}
(D_1,F_{11})=(D_1,F_{33})=2,\quad (D_2,F_{14})=(D_2,F_{32})=2\\
(D_3,F_{13})=(D_3,F_{31})=2,\quad (D_4,F_{12})=(D_4,F_{34})=2
\end{split}
\end{equation}
and there are no other edges between $K_4'$ and Figure \ref{12A}.
We summarize as follows.
\begin{proposition}
The Enriques surface $S_1$ has $12+4=16$ smooth rational curves on it. Their configuration
is described by Figure \ref{12A}, the graph $K_4'$ and the relation \eqref{12A+K4'}.
\end{proposition}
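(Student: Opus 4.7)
The proposition packages two claims: first, that the sixteen candidates $L_{12}, L_{34}, L'_{12}, L'_{34}, F_{ij}, D_k$ are pairwise distinct smooth rational curves on $S_1$, and second, that their intersection pattern is exactly the one encoded in Figure \ref{12A}, the doubled complete graph $K_4'$, and the cross-relations \eqref{12A+K4'}. My plan is to reduce every intersection computation on $S_1$ to the double Kummer configuration on $X$ via the étale double cover $\pi: X \to S_1$, and then to enumerate the orbits of $\varepsilon_1$ carefully to match the picture.

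The key reduction is the following. If $\bar C$ and $\bar D$ are smooth rational curves on $S_1$ whose preimages on $X$ are the disjoint pairs $C + \varepsilon_1(C)$ and $D + \varepsilon_1(D)$, then $\pi^*\bar C = C+\varepsilon_1(C)$, $\bar C^2 = -2$, and
\[
\bar C\cdot\bar D \;=\; \tfrac{1}{2}(C+\varepsilon_1(C))\cdot(D+\varepsilon_1(D)) \;=\; C\cdot D + C\cdot\varepsilon_1(D).
\]
Thus every intersection on $S_1$ becomes an arithmetic expression in the known intersection numbers of the twenty-four double Kummer curves $L_i, L'_i, E_{ij}$ and the eight additional curves $C_k, C'_k$ on $X$. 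Since $\varepsilon_1$ is fixed-point-free, each $\varepsilon_1$-orbit of such a curve contributes a single $(-2)$-curve on $S_1$.

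Next I would count the orbits. Under the action of $\sigma=(12)(34)$ on indices, the four $L_i$ split into two $2$-orbits giving $L_{12}, L_{34}$; the four $L'_i$ split analogously into $L'_{12}, L'_{34}$; and the sixteen $E_{ij}$ split into eight $2$-orbits giving the eight curves $F_{ij}$ (with $i<\sigma(i)$, hence $i\in\{1,3\}$ and $j\in\{1,2,3,4\}$). This accounts for the twelve curves of Figure \ref{12A}. The four pairs $\{C_k, C'_k\}$ are each $\varepsilon_1$-orbits, because $\varepsilon_1$ preserves the reducible divisor $c_k$ while exchanging the two sheets of $\bar X \to R$; this gives the four $(-2)$-curves $D_k$ on $S_1$. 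The sixteen curves are visibly pairwise distinct as they have distinct preimages on $X$. The intersection matrix in Figure \ref{12A} is then obtained directly by substituting the standard numbers $L_i\cdot E_{jk}=\delta_{ij}$, $L'_i\cdot E_{jk}=\delta_{ik}$, $L_i\cdot L'_j=1$ (at $p_{ij}$, pulled back as $E_{ij}$ is blown up, so actually $=0$ after resolution), and $E_{ij}^2=-2$ with all other intersections zero, into the descent formula. The doubled complete graph $K_4'$ on $\{D_k\}$ is precisely Lemma \ref{splitting}: for $k_1\neq k_2$, the union $C_{k_1}\cup C'_{k_1}\cup C_{k_2}\cup C'_{k_2}$ is two disjoint $I_2$-fibers, forcing $D_{k_1}\cdot D_{k_2}=2$. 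The cross-edges \eqref{12A+K4'} come from reading \eqref{11}: the four points of $c_k$ pair up under $\sigma$ into two $F_{ij}$'s, and $(C_k+C'_k)\cdot(E_{ij}+E_{\sigma(i)\sigma(j)})=2$ for exactly those two indices.

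The only real obstacle is bookkeeping: one must fix a consistent convention for the representative in each $\sigma$-orbit on $\{E_{ij}\}$, verify that the labels $F_{ij}$ assigned to each vertex of Figure \ref{12A} correspond correctly to the position of that vertex in the square, and check that the cross-relations \eqref{12A+K4'} land on the right four pairs of $F$-vertices. Once the orbit representatives and the four sets of four points in \eqref{11} are written out explicitly, every remaining step is a one-line calculation, and no intersection outside those listed can be nonzero since the corresponding pullback sum on $X$ is a sum of disjoint curves.
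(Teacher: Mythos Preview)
Your proposal is correct and follows essentially the same approach as the paper: the proposition is stated in the paper as a summary of the preceding discussion, and your descent of intersection numbers via $\pi^*$, your orbit count under $\sigma=(12)(34)$, and your appeal to Lemma \ref{splitting} for the $K_4'$ part reproduce exactly that argument. One small remark: the phrase ``has $12+4=16$ smooth rational curves'' should be read as ``has these particular sixteen,'' not ``has exactly sixteen,'' since by Proposition \ref{infinite_aut} the surface $S_1$ has infinite automorphism group and hence infinitely many smooth rational curves---your argument is already aligned with this reading.
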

We can choose the rational curves on $S_1$ defined for example by
\begin{equation}\label{SingS_1}
F_{14}+F_{13}+L'_{34}+F_{34}+L_{34}+F_{31}+L'_{12}+F_{12} \ \cup \ D_1.
\end{equation}
This gives the resolution graph of rational double points of type $D_8$ and $A_1$.
By Artin's contractability theorem(\cite{A}), we obtain the following.
\begin{proposition}
There exists a birational morphism $S_1\rightarrow \overline{S}_1$ which contracts
the nine curves above \eqref{SingS_1} to rational double points of type $D_8 \oplus A_1$.
The surface $\overline{S}_1$  is a
Gorenstein $\Q$-homology projective plane.
\end{proposition}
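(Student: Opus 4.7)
The plan is to verify that the nine curves listed in \eqref{SingS_1} form a configuration of smooth rational $(-2)$-curves whose dual graph is exactly $D_8 \oplus A_1$, apply Artin's contractability theorem, and then compute the second Betti number of the contracted surface.

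First I would read off the dual graph from Figure \ref{12A} together with \eqref{12A+K4'}. The eight curves $F_{14}, F_{13}, L'_{34}, F_{34}, L_{34}, F_{31}, L'_{12}, F_{12}$ should assemble into a $D_8$ diagram with $L'_{34}$ as the trivalent vertex, $F_{13}$ and $F_{14}$ as its two length-one branches, and $F_{34} - L_{34} - F_{31} - L'_{12} - F_{12}$ as the long tail. According to \eqref{12A+K4'}, $D_1$ meets only $F_{11}$ and $F_{33}$, so it is disjoint from all eight curves and supplies the isolated $A_1$ vertex.

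Next I would confirm that each of the nine curves is a smooth rational $(-2)$-curve. Every $L_{ij}, L'_{ij}, F_{ij}$ is the image under the \'etale double cover $\pi\colon X \to S_1$ of a disjoint pair of $(-2)$-curves on $X$, so pulling back and using $\pi^*C\cdot \pi^*C = 2\,C^2$ gives $C^2=-2$; similarly $D_1$ descends from $C_1+C_1'$, a disjoint pair of $(-2)$-curves by Lemma \ref{splitting}. Irreducibility and smoothness follow because the $\varepsilon_1$-action freely permutes the two components of each pair.

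With the configuration in hand, Artin's contractability theorem \cite{A} produces a birational morphism $S_1 \to \overline{S}_1$ to a normal projective surface, contracting the nine curves to one rational double point of type $D_8$ and one of type $A_1$; in particular $\overline{S}_1$ is Gorenstein with $K_{\overline{S}_1} \equiv 0$. Finally, since $S_1$ is an Enriques surface with $b_2(S_1)=10$ and the nine exceptional curves span a sublattice of rank $9$, we obtain $b_2(\overline{S}_1) = 10-9 = 1$, so $\overline{S}_1$ is a Gorenstein $\mathbb{Q}$-homology projective plane. The only tedious step is verifying the pairwise intersections from Figure \ref{12A} and \eqref{12A+K4'}; once those are accepted the argument is mechanical, and I foresee no substantive obstacle.
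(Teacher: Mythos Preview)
Your proposal is correct and follows the same approach as the paper: the paper's ``proof'' is in fact just the sentence immediately preceding the proposition, namely the assertion that the configuration \eqref{SingS_1} has dual graph $D_8\oplus A_1$ together with the citation of Artin's contractability theorem \cite{A}. You simply make explicit what the paper leaves to the reader---the verification of the intersection pattern from Figure~\ref{12A} and \eqref{12A+K4'}, the $(-2)$-curve check, and the Betti number count---so there is no substantive difference in strategy.
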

This gives the first surface in Theorem \ref{ex-thm}.\\

\noindent {\bf{The Kondo-Mukai involution.}}\quad The second free involution $\varepsilon_2$
is constructed as follows. As is introduced in \cite{Ohashi07},
we project the quadric surface $R\subset \P^3$ onto $\P^2$ from the
point $p_{44}$.
Then the double cover $\overline{X}/R$ is birationally transformed into the second double
cover
$\overline{X'}\rightarrow \P^2=R'$.
If we use the homogeneous coordinates $(x_0:x_1:x_2)$
of $R'$ with the birational isomorphism $x_1/x_0=x, x_2/x_0=x'$ to $R$, then
the branch of $\overline{X'}/R'$ is given by the union of lines
\[\begin{split}
l_1\colon x_1=0; \quad l_2\colon x_1=x_0; \quad l_3\colon x_1=\lambda x_0;\\
l_1'\colon x_2=0; \quad l_2'\colon x_2=x_0; \quad l_3'\colon x_2=\lambda x_0.
\end{split}\]
Here and in what follows, we denote the curves and points of $R'$
corresponding to those of $R$ by the same letter. We note that the point $p_{44}\in R$
is blown up to the line at infinity of $R'$.

The Kondo-Mukai involution $\varepsilon_2$ of $X$ is induced from the Cremona involution
of $R'$ whose center is $\{p_{12}, p_{21}, p_{33}\}$ and which interchanges the two points
$(0:1:0)$ and $(0:0:1)$. The latter two points are the blow-downs of lines
$l_4$ and $l_4'$ respectively.
By the change of coordinates
\begin{equation*}
\begin{split}
y_0&=x_1+x_2-x_0,\\
y_1&=x_1-(1-\lambda^{-1})x_2-x_0,\\
y_2&=x_2-(1-\lambda^{-1})x_1-x_0,
\end{split}
\end{equation*}
the Cremona involution is given by the formula
\[(y_0:y_1:y_2)\mapsto \left( \frac{1}{y_0}: \frac{-1+\lambda^{-1}}{y_1}:\frac{-1+\lambda^{-1}}{y_2}\right).\]
Since the four isolated fixed points are not on the branch lines, we see in fact that
a suitable lift $\varepsilon_2$ is a fixed point free involution of $X$. This is one of what
is known as the {\em{Kondo-Mukai involution}}.
\begin{proposition}
The involution $\varepsilon_2$ acts on the curves on $X$ as follows. We also define
the notation of the corresponding curves on $S_2=X/\varepsilon_2$ by the table.
\begin{figure}[h]
\begin{longtable}{c|c}
On $X$ & Notation \\ \hline
$E_{11} \leftrightarrow E_{22}$ & $F_{11}$ \\
$E_{13} \leftrightarrow E_{32}$ & $F_{13}$ \\
$E_{14} \leftrightarrow E_{42}$ & $F_{14}$ \\
$E_{23} \leftrightarrow E_{31}$ & $F_{23}$ \\
$E_{24} \leftrightarrow E_{41}$ & $F_{24}$ \\
$E_{34} \leftrightarrow E_{43}$ & $F_{34}$ \\
$L_{1} \leftrightarrow L'_{2}$ & $L_{12}$ \\
$L_2 \leftrightarrow L'_1$ & $L_{21}$ \\
$L_3 \leftrightarrow L'_3$ & $L_{33}$ \\
$L_4 \leftrightarrow L'_4$ & $L_{44}$
\end{longtable}
\caption{The action of $\epsilon_2$ on the $20$ rational curves on $X$}
\end{figure}
\end{proposition}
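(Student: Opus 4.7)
The plan is to reduce the table entirely to the action of the underlying Cremona involution $\sigma$ on $R' = \mathbb{P}^2$: since $\varepsilon_2$ is by construction a lift of $\sigma$ through the double cover $\overline{X'} \to R'$, once I determine how $\sigma$ permutes the six branch lines, the two contracted points $(0:0:1)$ and $(0:1:0)$ (images of $l_4$ and $l'_4$), and the intersection points $p_{ij}$, I can immediately read off the action of $\varepsilon_2$ on $L_i, L'_j$ and $E_{ij}$ on $X$.

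The main step is to compute how $\sigma$ permutes the six branch lines. I would first verify directly in $(x_0:x_1:x_2)$-coordinates that among them, exactly $l_1$ and $l'_2$ pass through the base point $p_{12}$, exactly $l_2$ and $l'_1$ through $p_{21}$, and exactly $l_3$ and $l'_3$ through $p_{33}$. Since a standard Cremona involution preserves each pencil of lines through one of its base points, the three pairs $\{l_1, l'_2\}$, $\{l_2, l'_1\}$, $\{l_3, l'_3\}$ are $\sigma$-stable as sets. To decide whether $\sigma$ swaps within each pair, I would use that $\sigma$ exchanges $(0:1:0) \leftrightarrow (0:0:1)$ (immediate from the explicit $y$-coordinate formula after reverting the linear change of variables). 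In the pencil through $p_{12}$, the line $l_1$ is distinguished as the unique member also passing through $(0:0:1)$ and $l'_2$ as the one through $(0:1:0)$; since $\sigma$ swaps these two target points, it must swap $l_1 \leftrightarrow l'_2$. Analogous arguments at $p_{21}$ and $p_{33}$ yield $l_2 \leftrightarrow l'_1$ and $l_3 \leftrightarrow l'_3$.

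With this in hand the rest is bookkeeping. Writing $\tau = (1\,2)$ on $\{1,2,3,4\}$ (fixing $3$ and $4$), one has
\[\sigma(p_{ij}) \;=\; \sigma(l_i) \cap \sigma(l'_j) \;=\; l'_{\tau(i)} \cap l_{\tau(j)} \;=\; p_{\tau(j)\,\tau(i)},\]
which reproduces the six $E_{ij}$-swaps in the table. The swap $L_4 \leftrightarrow L'_4$ follows directly from $(0:0:1) \leftrightarrow (0:1:0)$, and the three swaps $L_1 \leftrightarrow L'_2$, $L_2 \leftrightarrow L'_1$, $L_3 \leftrightarrow L'_3$ come from the action on branch lines already established.

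The main obstacle requiring care is that $\sigma$ has indeterminacy at its three base points, so strictly speaking one should pass to the blow-up $\widetilde{R'}$ of $R'$ at $\{p_{12}, p_{21}, p_{33}\}$, a del Pezzo surface of degree $6$ on which $\sigma$ is a regular involution, and verify that $X \to R'$ factors through $\widetilde{R'}$ so that $\varepsilon_2$ is truly the lift of the regularized $\sigma$. This framework also clarifies why the table lists only $20$ of the $24$ curves in the double Kummer configuration: the remaining four $E_{12}, E_{21}, E_{33}, E_{44}$ sit over the indeterminacy locus of $\sigma$ (together with $p_{44}$, which was itself blown up in $R \to R'$), and their $\varepsilon_2$-images are proper transforms of the three lines joining pairs of Cremona base points, which are not part of the double Kummer configuration.
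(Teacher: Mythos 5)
Your argument is correct and is precisely the ``computations and easy combinatorial observations'' that the paper's one-line proof alludes to: you track the action of the Cremona involution on the six branch lines (via the pencils through its base points and the swap $(0{:}1{:}0)\leftrightarrow(0{:}0{:}1)$), on the two contracted points, and on the nodes, which is exactly the intended verification. The only implicit point worth making explicit is that for the $20$ curves in the table the choice of lift of the Cremona involution to $X$ is immaterial, since the deck transformation of the double cover preserves each $L_i$, $L_i'$ and $E_{ij}$ setwise, so the action really is determined by the base.
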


\begin{proof}
All cases can be verified by computations and easy combinatorial observations.
\end{proof}
We also see that $\varepsilon_2$ exchanges $C_1$ and $C_1'$, which produce
the smooth rational curve $D_1$ on $S_2$.
The image $\varepsilon_2(E_{12})$ of the curve $E_{12}$ is the
inverse image of the line connecting $p_{21}$ and $p_{33}$ and they produce the
smooth rational curve $B$ on $S_2$.
The dual graph of these twelve curves introduced so far is depicted in Figure \ref{10A}.

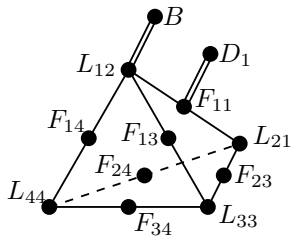
\begin{figure}[h]
\centering
\begin{picture}(80,90)
\linethickness{0.7pt}
\put(0,10){\circle*{6}}
\put(-16,13){$L_{44}$}
\put(30,10){\circle*{6}}
\put(60,10){\circle*{6}}
\put(64,4){$L_{33}$}
\put(30,62){\circle*{6}}
\put(10,61){$L_{12}$}
\put(15,36){\circle*{6}}
\put(-1,40){$F_{14}$}
\put(45,36){\circle*{6}}
\put(72,34){\circle*{6}}
\put(77,34){$L_{21}$}
\put(66,22){\circle*{6}}
\put(51,48){\circle*{6}}
\put(55,48){$F_{11}$}
\put(36,22){\circle*{6}}
\put(70,19){$F_{23}$}
\put(32,1){$F_{34}$}
\put(17,23){$F_{24}$}
\put(27,35){$F_{13}$}
\put(0,10){\line(1,0){60}}
\put(60,10){\line(-15,26){30}}
\put(30,62){\line(-15,-26){30}}
\put(30,62){\line(3,-2){40}}
\put(72,34){\line(-1,-2){12}}
\multiput(0,10)(6,2){12}{\line(3,1){3}}
\put(50,48){\line(1,2){10}}
\put(52,48){\line(1,2){10}}
\put(61,68){\circle*{6}}
\put(64,65){$D_1$}
\put(29,62){\line(1,2){10}}
\put(31,62){\line(1,2){10}}
\put(40,82){\circle*{6}}
\put(43,79){$B$}
\end{picture}
\caption{Smooth rational curves on $S_2$}
\label{10A}
\end{figure}

We can choose the rational curves on $S_2$ defined by
\begin{equation}\label{SingS_2}
F_{14}+L_{44}+F_{34}+L_{33}+F_{13}+F_{23}+L_{21}\ \cup B\ \cup D_1.
\end{equation}
This gives the resolution graph of rational double points of type $E_7 \oplus 2A_1$.
As in the first example, we obtain the following second surface of theorem \ref{ex-thm}.
\begin{proposition}
There exists a birational morphism $S_2\rightarrow \overline{S}_2$ which contracts
the nine curves above \eqref{SingS_2} to rational double points of type $E_7 \oplus 2A_1$.
The surface $\overline{S}_2$ is a
Gorenstein $\Q$-homology projective plane.
\end{proposition}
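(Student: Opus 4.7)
The plan is to verify three claims in sequence: (a) the nine curves in \eqref{SingS_2} form a configuration whose dual graph is the Dynkin diagram $E_7 \oplus 2A_1$; (b) this configuration contracts to rational double points of the prescribed types; (c) the resulting surface $\overline{S}_2$ is a Gorenstein $\Q$-homology projective plane.

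For (a), I would read off intersection numbers from Figure \ref{10A} together with the explicit action of $\varepsilon_2$ tabulated just before it. Every smooth rational curve on the Enriques surface $S_2$ has self-intersection $-2$, so only the pairwise intersections among the nine chosen curves need checking. The seven curves $F_{14}, L_{44}, F_{34}, L_{33}, F_{13}, F_{23}, L_{21}$ should form a chain with one vertex of valence three (the branching required by $E_7$); the remaining two, $B$ and $D_1$, should each be disjoint from the other eight. Note that the ordering in \eqref{SingS_2} is not the chain order of $E_7$, so the trivalent vertex must be identified from the graph itself rather than from the listing.

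For (b), once (a) is established, the nine curves span a negative-definite root sublattice of $\mathrm{Num}(S_2)$, and Artin's contractibility theorem \cite{A} produces the required birational contraction $f\colon S_2\to\overline{S}_2$ to a normal projective surface with rational double points of type $E_7\oplus 2A_1$. Rational double points are Gorenstein, and because $K_{S_2}\equiv 0$ on the Enriques surface $S_2$ one has $K_{S_2}=f^*K_{\overline{S}_2}$, so $K_{\overline{S}_2}$ is numerically trivial. For (c), the second Betti number satisfies $b_2(\overline{S}_2)=b_2(S_2)-9=10-9=1$, since the nine contracted $(-2)$-curves are linearly independent in $H^2(S_2,\Q)$ (their Gram matrix, being that of $E_7\oplus 2A_1$, is nondegenerate). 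Combined with the rational double points of (b), this gives exactly the definition of a Gorenstein $\Q$-homology projective plane.

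The main obstacle is the combinatorial verification in step (a): one must carefully read the branching structure from Figure \ref{10A}, determine which of the seven curves is the trivalent vertex (note that $L_{44}$ and $L_{33}$ meet $F_{24}$ and $F_{23}$ along the bottom edge, so some non-chain adjacencies appear in the full graph and must be confirmed not to involve the chosen nine), and rule out any stray intersections between $\{B, D_1\}$ and the $E_7$ chain. In particular, the double-edge incidences noted for $D_1$ and $B$ (with $F_{11}$ and $L_{12}$ respectively) must be confirmed not to occur with any of the nine chosen curves, so that $B$ and $D_1$ indeed contribute two genuine $A_1$ components disjoint from the $E_7$ piece.
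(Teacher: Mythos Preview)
Your proposal is correct and follows exactly the approach the paper takes: the paper simply notes that the configuration \eqref{SingS_2} has dual graph $E_7\oplus 2A_1$ (read off from Figure \ref{10A}, with trivalent vertex $L_{33}$) and then invokes Artin's contractibility theorem, just as for the first example $S_1$. Your step (c) on the Betti-number count is the standard argument implicit throughout Section 2 of the paper, so nothing is missing.
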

This completes the proof of Theorem \ref{ex-thm}.

\smallskip

\noindent\textbf{Acknowledgments.} This work was supported by Basic Science Research Program through the National Research Foundation of Korea (NRF-2011-0022904) and by the National Research Foundation of Korea (NRF-2007-C00002). The authors are grateful to the referees for careful reading and crucial suggestions. DongSeon Hwang would like to thank Kenji Hashimoto for useful discussion. 

\bigskip
\noindent
{\bf Added in Proof.} The authors have been informed that the type $A_3 \oplus 3A_2$ realizes on the Enriques surface constructed by Slawomir Rams and Matthias Sch\"utt [On Enriques surfaces with four cusps, arXiv:1404.3924, Example 3.9]. The Enriques surface admits an elliptic fibration with four $I_3$-fibres, one of them a double fibre, such that the configuration of the 12 nodal curves and a suitable double section contains the configuration $A_3 \oplus 3A_2$.


\end{document}